\newtheorem{theorem}{Theorem}[section]
\newtheorem{lemma}[theorem]{Lemma}
\newtheorem{proposition}[theorem]{Proposition}
\newtheorem{assumption}{Assumption}
\theoremstyle{definition}
\newtheorem{definition}[theorem]{Definition}
\theoremstyle{remark}
\newtheorem{remark}[theorem]{Remark}
\numberwithin{equation}{section}
\begin{document}
\def\Pro{{\mathbb{P}}}
\def\E{{\mathbb{E}}}
\def\e{{\varepsilon}}
\def\veps{{\varepsilon}}
\def\ds{{\displaystyle}}
\def\nat{{\mathbb{N}}}
\def\Dom{{\textnormal{Dom}}}
\def\dist{{\textnormal{dist}}}
\def\R{{\mathbb{R}}}
\def\O{{\mathcal{O}}}
\def\T{{\mathcal{T}}}
\def\Tr{{\textnormal{Tr}}}
\def\sgn{{\textnormal{sign}}}
\def\I{{\mathcal{I}}}
\def\A{{\mathcal{A}}}
\def\H{{\mathcal{H}}}
\def\S{{\mathcal{S}}}

\title{Global solutions for the stochastic reaction-diffusion equation with super-linear multiplicative noise and strong dissipativity}%
\author{M. Salins\\ Boston University \\ msalins@bu.edu}
\maketitle

\begin{abstract}
  A condition is identified that implies that solutions to the stochastic reaction-diffusion equation
$\frac{\partial u}{\partial t} = \mathcal{A} u + f(u) + \sigma(u) \dot{W}$
on a bounded spatial domain never explode. We consider the case where $\sigma$ grows polynomially and $f$ is polynomially dissipative, meaning that $f$ strongly forces solutions toward finite values. This result demonstrates the role that the deterministic forcing term $f$ plays in preventing explosion.

\end{abstract}

%
%

\section{Introduction} \label{S:intro}
  We investigate the role that dissipative forcing plays in preventing explosion of solutions to the stochastic reaction-diffusion equation (SRDE) defined on an open bounded domain $D \subset \mathbb{R}^d$ with appropriately smooth boundary
  \begin{equation} \label{eq:SRDE}
    \begin{cases}
      \displaystyle
      \frac{\partial u}{\partial t}(t,x) = \mathcal{A} u(t,x)  + f(u(t,x)) + \sigma(u(t,x))\dot{W}(t,x)\\
      u(t,x) = 0 , \ \ \ x \in \partial D,\\
      u(0,x) = u_0(x).
    \end{cases}
  \end{equation}
  In the above equation, $\mathcal{A}$ is a second-order linear operator and $\dot{W}$ is a Gaussian noise. The function $f(u(t,x))$ models a state-dependent external force and $\sigma(u(t,x))\dot{W}$ models a state-dependent stochastic forcing.

  The presence of stochastic forcing can cause solutions to become arbitrarily large with positive probability and may even cause solutions to explode in finite time. Mueller and collaborators \cite{m-1991,m-1998,ms-1993,m-2000} investigated explosion for solutions to \eqref{eq:SRDE} when $\mathcal{A} = \Delta$, $f(u)\equiv 0$ ,  $\dot{W}$ is a space-time white noise and the spatial dimension is $d=1$.
  Solutions can explode in finite time if $|\sigma(u)|> c|u|^\gamma$ for some $c>0$ and $\gamma> \frac{3}{2}$. Furthermore, this $\frac{3}{2}$ power is critical in the sense that solutions never explode if $|\sigma(u)| \leq C(1 + |u|^\gamma)$ for some $C>0$ and $\gamma<\frac{3}{2}$. For other investigations of explosion of similar equations see \cite{chow-2009,chow-2011,fln-2019,b-2018}. In this paper, we demonstrate that $\gamma$ can be arbitrarily large and solutions will never explode as long as the strong stochastic forcing is compensated by an appropriately strong dissipative forcing $f$.

  Early investigations of these reaction-diffusion equations proved that there exists a unique solution to \eqref{eq:SRDE} whenever $f$ and $\sigma$ are both globally Lipschitz continuous with at most linear growth \cite{k-1992,s-1992,d-1999,ss-2000,w-1986,dpz-book,wr-book,pz-2000,dkmnx-book}. Linearly growing $f$, however, will not be strong enough to prevent the expansive effects of a superlinearly growing $\sigma$.
  The existence of global solutions for \eqref{eq:SRDE} when $\sigma$ is locally Lipschitz continuous with linear growth and $f$ is strongly dissipative with polynomial growth was established by Cerrai \cite{c-2003}. The restriction to polynomial growing $f$ is common in the literature \cite{mz-1999,i-1987,bp-1999,c-2011}, but Da Prato and R\"ockner \cite{dr-2002} and Marinelli and R\"ockner \cite{mr-2010} proved that the polynomial growth restriction can be relaxed, and that existence and uniqueness of solutions to \eqref{eq:SRDE} is implied by a monotonicity condition on $f$. See also \cite{m-2018,cdf-2013,mr-2010,grt-2020}. These monotonicity conditions allow for strongly dissipative forcing terms.

  In this paper, we assume $\sigma$ grows polynomially like $|\sigma(u)|\leq K_2(1 + |u|^\gamma)$ for some $\gamma>1$ and we assume that there exist $K_1>0$, $c_0>0$, and $\beta>1$ such that $f(u)\sgn(u) \leq -K_1 |u|^\beta$ for $|u|>c_0$. In particular, this means that $f(u)$ pushes solutions away from $\pm \infty$ when $|u|$ is large.  This strong dissipation can counteract the expansive effects of the stochastic forcing, preventing explosion.
%
%

  Before introducing the exact relationship between $\beta$ and $\gamma$ that implies that the solutions to the reaction-diffusion are global, consider a related problem for a finite dimensional stochastic differential equation defined on $\mathbb{R}^d$. Let $B(t)$ be a $d$-dimensional Wiener process, let $\beta, \gamma>0$, and let $X(t)$ solve
  \begin{equation} \label{eq:SDE}
    dX(t) = -|X(t)|^{\beta -1} X(t) dt + (1 + |X(t)|)^\gamma dB(t), \ \ \ X(0) = x \in \mathbb{R}^d.
  \end{equation}
  For $R>0$, let $\tau_R = \inf\{t>0: |X(t)| >R\}$. The coefficients are all locally Lipschitz continuous, so $X(t \wedge \tau_R)$ is a well-defined stochastic process.
  By Ito formula, for any $R>0$, $T>0$,
  \begin{equation} \label{eq:Ito}
    \E|X(T\wedge \tau_R)|^2 \leq |x|^2 +  \E \int_0^{T \wedge \tau_R} \left(- 2 |X(t \wedge \tau_R)|^{\beta + 1} + (1  + |X(t \wedge \tau_R)|)^{2\gamma}\right)dt.
  \end{equation}
  {\color{black}
  If $\gamma\in [0, 1]$ then the multiplicative noise coefficent is sublinear and a Gr\"onwall argument proves that for some $C>0$ independent of $R$,
  \begin{equation}
    \E|X(T\wedge \tau_R)|^2 \leq (|x|^2 + CT) e^{CT}.
  \end{equation}
  Letting $R \to \infty$ we see that  solutions cannot explode. The value of $\beta \geq 0$ does not affect explosion $\gamma \in [0,1]$.

  We are particularly interested in the case where $\gamma>1$ so that the multiplicative noise term is superlinear. A Gr\"onwall argument cannot be used in this setting, but if $\beta$ is large enough so that}
  \begin{equation} \label{eq:Ito-cond}
   \gamma< \frac{\beta + 1}{2}
  \end{equation}
   then the integrand in \eqref{eq:Ito} is uniformly bounded by a constant independent of $R$. Therefore, for any $R>0$, $T>0$,
   \begin{equation}
     \E|X(T \wedge \tau_R)|^2 \leq |x|^2 +  CT.
   \end{equation}
   We can prove that solutions to the finite dimensional SDE never explode by taking the limit as $R \to \infty$ on the left.

  In the case of SRDEs \eqref{eq:SRDE}, however, such an Ito formula argument will not work unless the stochastic forcing has a trace-class covariance. We will not assume trace-class covariance in general, although we will prove that the conditions of our main result are almost the same as the Ito formula condition \eqref{eq:Ito-cond} in the trace-class setting.

{\color{black}
  If $\sigma$ grows at most linearly ($\gamma \in [0,1]$) and $f$ is dissipative, then solutions to \eqref{eq:SRDE} cannot explode in finite time (see, for example, \cite{c-2003}). When $\sigma$ grows polynomially, however, solutions can explode unless the superlinear stochastic force is compensated by an appropriately strong dissipative force \cite{ms-1993,m-2000}. For this reason, we only study the case where both $\gamma>1$ and $\beta>1$.
}

  The main result of this paper (Theorem \ref{thm:global-existence}) proves that mild solutions to the stochastic reaction diffusion-equation never explode if
  \begin{equation} \label{eq:gamma-beta-intro}
    \gamma < 1 + \frac{(1-\eta)(\beta-1)}{2}
  \end{equation}
  where $\eta \in [0,1)$ is a constant that describes the balance between the eigenvalues of the elliptic operator $\mathcal{A}$ and of the noise $\dot{W}$ (see Assumption \ref{assum:W}, below). Note that when $\eta =0$, \eqref{eq:Ito-cond} and \eqref{eq:gamma-beta-intro} coincide.

  {\color{black}
  A mild solution to \eqref{eq:SRDE} solves the integral equation
  \begin{equation}
    u(t) = S(t)u(0) + \int_0^t S(t-s)f(u(s))ds  + \int_0^t S(t-s)\sigma(u(s))dW(s)
  \end{equation}
  where $S(t)$ is the semigroup generated by the elliptic operator $\mathcal{A}$. The spatial variable is suppressed in the above equation.

  Instead of using Ito formula, we will take advantage of the fact that super-linear dissipativity terms like $f(u) = -|u|^{\beta-1} u$ lead to estimates on the solutions that are independent of the initial data. Consider, for example, the deterministic ordinary differential equation
  \begin{equation} \label{eq:ODE}
    \frac{d\phi}{dt} = -|\phi(t)|^{\beta-1} \phi(t).
  \end{equation}
  The solution to this equation is
  \begin{equation}
    \phi(t) =  \pm \left( |\phi(0)|^{-(\beta -1)} + (\beta -1) t \right)^{- \frac{1}{\beta -1}} .
  \end{equation}
  Unlike in the linear ($\beta=1$) case, when $\beta>1$ we can obtain bounds that are independent of initial data. In particular, for any $t>0$,
  \begin{equation}
    |\phi(t)| \leq C t^{-\frac{1}{\beta-1}} \text{ for any initial data}.
  \end{equation}

  A similar bound will hold for the mild solution. In particular, Lemma \ref{lem:decay} below proves that if the stochastic convolution term \begin{equation} \label{eq:intro-stoch-conv}
    Z(t): = \int_0^t S(t-s)\sigma(u(s))dW(s)
  \end{equation}
   is relatively small compared to $u$ in the sense that $|Z(t)|_{L^\infty(D)} \leq \frac{1}{3} |u(t)|_{L^\infty(D)}$ for $t \in [0,T]$, then
  \begin{equation} \label{eq:intro-decay}
    |u(t)|_{L^\infty(D)} \leq \frac{3}{2}\left( |u(0)|_{L^\infty(D)}^{-(\beta-1)} + \frac{K_1}{2^\beta(\beta-1)} t \right)^{-\frac{1}{\beta -1}}
  \end{equation}

  On the other hand, estimates from \cite{c-2003,c-2009} prove that the $p$th moments of the supremum of the stochastic convolution \eqref{eq:intro-stoch-conv} satisfy bounds like
  \begin{equation} \label{eq:intro-factor}
    \E \sup_{t \in [0,T]} \sup_{x \in D} |Z(t,x)|^p \leq C_p E\int_0^T  \E\left(\int_0^t (t-s)^{-2 \alpha - \eta} |\sigma(u(s))|_{L^\infty(D)}^2ds\right)^{\frac{p}{2}}dt
  \end{equation}
  where $\eta \in (0,1)$ is the constant in \eqref{eq:gamma-beta-intro} and $\alpha \in \left(0, \frac{1-\eta}{2}\right)$. Because $\sigma(u)$ grows like $|u|^\gamma$ and $u(t)$ decays like \eqref{eq:intro-decay} due to the strong dissipativity of $f$, the inner integral can be bounded by
  \begin{equation} \label{eq:intro-linear}
    C |u(0)|_{L^\infty(D)}^2 \int_0^t (t-s)^{-\eta-2\alpha}  s^{-\frac{(\gamma-1)}{(\beta-1)}}ds.
  \end{equation}
  The Beta function $\int_0^t (t-s)^{-\eta-2\alpha}  s^{-\frac{(\gamma-1)}{(\beta-1)}}ds$ is uniformly bounded for $t \in [0,1]$ if and only if
  \begin{equation}
    \eta + 2\alpha + \frac{(\gamma-1)}{(\beta-1)} \leq 1,
  \end{equation}
  which is equivalent to condition \eqref{eq:gamma-beta-intro}. Despite the fact that $\sigma$ is superlinear, the estimates \eqref{eq:intro-factor}--\eqref{eq:intro-linear} show that the size of the stochastic convolution only depends linearly on the initial value of $u(0)$.

  To make these ideas rigorous, we introduce a sequence of stopping times that keep track of when the spatial $L^\infty$ norm triples or falls to one-third of its previous value. Under condition \ref{eq:gamma-beta-intro}, we prove using the ideas above that the $L^\infty$ norm cannot triple in a short amount of time. This prevents the mild solutions from exploding.
}

  In Section \ref{S:assum} we introduce our assumptions and state the main result. In Section \ref{S:estimates} we recall important estimates from \cite{c-2003,c-2009} and we prove new estimates on solutions that are uniform with respect to initial data. In Section \ref{S:proof-of-main}, we prove the main result. We conclude with a discussion in Section \ref{S:comparison} about the relationship between the main result of this paper and Mueller's explosion results \cite{m-2000}.

\section{Notation, assumptions and main result} \label{S:assum}
\subsection{Notation} \label{S:notation}
Let $D \subset \mathbb{R}^d$ be an open, bounded domain. For $p\in [1,+\infty)$, define $L^p(D)$ to be the Banach space of functions $v: D \to \mathbb{R}$ such that the norm
\begin{equation}
  |v|_{L^p(D)} := \left(\int_D |v(x)|^pdx\right)^{\frac{1}{p}}
\end{equation}
is finite.
When $p=+\infty$, the $L^\infty(D)$ norm is
\begin{equation}
  |v|_{L^\infty(D)} : = \sup_{x \in D} |v(x)|.
\end{equation}

Define $C_0(\bar D)$ to be the subset of $L^\infty(D)$ of continuous functions $v: \bar D \to \mathbb{R}$ such that $v(x) = 0$ for $x \in \partial D$.
Define $C_0([0,T]\times \bar D)$ to be the set of continuous functions $v: [0,T] \times \bar D \to \mathbb{R}$ such that $v(t,x) = 0$ for $x \in \partial D$, endowed with the supremum norm
\begin{equation}
  |v|_{C_0([0,T]\times \bar D)} := \sup_{t \in [0,T]} \sup_{x \in \bar D} |v(t,x)|.
\end{equation}

\subsection{Assumptions}
  We make the following assumptions about the differential operator $\mathcal{A}$, the noise $\dot{W}$, and the deterministic and stochastic forcing terms $f$ and $\sigma$ in \eqref{eq:SRDE}.

  \begin{assumption} \label{assum:A}
    $\mathcal{A}$ is a second-order elliptic differential operator
    \begin{equation}
      \mathcal{A} \phi(x) = \sum_{i=1}^d \sum_{j=1}^d a_{ij}(x) \frac{\partial^2 \phi}{\partial x_i\partial x_j} + \sum_{i=1}^d b_i(x) \frac{\partial \phi}{\partial x_i}
    \end{equation}
    where $a_{ij}$ are continuously differentiable on $\bar{D}$ and $b_i$ are continuous on $D$.

    As observed in \cite{c-2003}, we can assume without loss of generality that $\mathcal{A}$ is self-adjoint. We make this assumption throughout the rest of the paper.

     Let $A$ be the realization of $\mathcal{A}$ in $L^2(D)$ with the Dirichlet boundary conditions. There exists a sequence of eigenvalues $0\leq \alpha_1 \leq \alpha_2 \leq ...$ and eigenfunctions $e_k \in L^2(D) \cap C_0(D)$ such that \cite[Chapter 6.5]{evans-book}
    \begin{equation}
      A e_k = -\alpha_k e_k \text{ and } |e_k|_{L^2(D)} = 1.
    \end{equation}
  \end{assumption}

  \begin{assumption} \label{assum:W}
    There exists a sequence of numbers $\lambda_j \geq 0$ and a sequence of i.i.d.  one-dimensional Brownian motions $\{B_j(t)\}_{j \in \mathbb{N}}$ such that formally
    \begin{equation}
      \dot W(t,x) = \sum_{k=1}^\infty \lambda_j e_j(x) dB_j(t).
    \end{equation}
    Furthermore, there exist exponents $\theta>0$ and $\rho \in [2,+\infty)$ such that
    \begin{equation} \label{eq:lambda-sum-assum}
      \begin{cases}
        \left(\sum_{j=1}^\infty \lambda_j^\rho |e_j|_{L^\infty(D)}^2 \right)^{\frac{2}{\rho}}< +\infty, & \text{ if } \rho \in [2,+\infty), \\
        \sup_j \lambda_j < +\infty & \text{ if } \rho=+\infty,
      \end{cases}
    \end{equation}
    \begin{equation} \label{eq:alpha-sum}
      \sum_{k=1}^\infty \alpha_k^{-\theta} |e_k|_{L^\infty(D)}^2 < +\infty,
    \end{equation}
    and
    \begin{equation} \label{eq:eta-def}
      \eta: = \frac{\theta (\rho-2)}{\rho} <1.
    \end{equation}
  \end{assumption}

  The constant $\eta<1$ defined in \eqref{eq:eta-def} is central to our analysis and shows up as a condition in our main theorem. The trace-class noise case corresponds to $\rho=2$ implying that $\eta=0$. In the case of space-time white noise on a one-dimensional spatial interval, we can take $\rho=+\infty$ and $\eta = \theta$ can be any number larger than $\frac{1}{2}$.

  \begin{assumption} \label{assum:f-sig}
    $f: \mathbb{R} \to \mathbb{R}$ and $\sigma: \mathbb{R} \to \mathbb{R}$ are continuous functions. There exist powers $\beta, \gamma>1$ and constants $c_0>0$ and $K_1, K_2>0$ such that
    \begin{equation} \label{eq:f-assum}
      f(u)\sgn(u) \leq -K_1 |u|^\beta \text{ for } |u|>c_0,
    \end{equation}
    \begin{equation} \label{eq:sig-assum}
      |\sigma(u)| \leq K_2(1 + |u|^\gamma) \text{ for } u \in \mathbb{R}^d
    \end{equation}
    and
    \begin{equation} \label{eq:beta-gamma-condition}
      \gamma < 1 + \frac{(1-\eta)(\beta-1)}{2}.
    \end{equation}
  \end{assumption}

  Notice that in the trace-class noise situation where $\eta=0$, condition \eqref{eq:beta-gamma-condition} matches condition \eqref{eq:Ito-cond} from the SDE case.

  \begin{assumption} \label{assum:init-data}
    The initial data $u_0 \in C_0(\bar{D})$.
  \end{assumption}

\subsection{Main result}
  Under Assumption \ref{assum:A}, the realization $A$ of $\mathcal{A}$ in $L^2(D)$ generates a $C_0$ semigroup $S(t)$.

  \begin{definition}
    A $C_0(\bar D)$-valued process $u(t)$ is \textit{local mild solution} to \eqref{eq:SRDE} \textcolor{black}{if }
    \begin{equation} \label{eq:mild}
      u(t) = S(t)u_0 + \int_0^t S(t-s)f(u(s))ds  + \int_0^t S(t-s)\sigma(u(s))dW(s)
    \end{equation}
    \textcolor{black}{for all $t \in [0,T_n]$ for any $n$ where $T_n$ is the stopping time
    \begin{equation}
      T_n: = \inf\{t>0: |u(t)|_{L^\infty(D)} \geq n\}.
    \end{equation}}
  \end{definition}

  \begin{definition}
    A mild solution $u$ is \textit{global} if $u(t)$ solves \eqref{eq:mild} for all $t>0$, with probability one. In other words, a solution is global if it never explodes.
  \end{definition}

  Now we present our main theorem.

  \begin{theorem} \label{thm:global-existence}
    Under Assumptions \ref{assum:A}--\ref{assum:init-data}, any local mild solution to \eqref{eq:SRDE} is a global solution.
  \end{theorem}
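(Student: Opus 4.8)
The plan is to use a stopping-time scheme that tracks when the spatial $L^\infty$ norm of the mild solution triples or drops to a third of its value, together with the uniform-in-initial-data decay estimate for the deterministic dissipation (Lemma \ref{lem:decay}) and the moment bounds on the stochastic convolution from \cite{c-2003,c-2009} sketched in \eqref{eq:intro-factor}--\eqref{eq:intro-linear}. Fix a local mild solution $u$; by definition it solves \eqref{eq:mild} up to each time $T_n$, and it suffices to show $\Pro(\sup_n T_n < \infty) = 0$, i.e.\ that explosion cannot occur on any fixed time interval $[0,T]$. I would work on the event that explosion happens before time $T$ and derive a contradiction with the moment estimates.

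First I would set up the stopping times: starting from $\tau_0 = 0$, define $\tau_{j+1}$ to be the first time after $\tau_j$ that $|u(t)|_{L^\infty(D)}$ either reaches $3|u(\tau_j)|_{L^\infty(D)}$ or drops to $\tfrac13 |u(\tau_j)|_{L^\infty(D)}$ (with the usual care if $u(\tau_j)=0$, where one first waits for the norm to reach some fixed threshold like $c_0\vee 1$). Explosion in finite time forces infinitely many "tripling" events to accumulate in $[0,T]$, so the tripling increments $\tau_{j+1}-\tau_j$ must go to zero along a subsequence. On the time interval $[\tau_j, \tau_{j+1}]$ between consecutive such stopping times, I would decompose $u(t) = $ (the part driven by $S(t-\tau_j)u(\tau_j)$ and $f$) $+ Z_j(t)$, where $Z_j$ is the stochastic convolution restarted at $\tau_j$. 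The key dichotomy: either $|Z_j(t)|_{L^\infty(D)} \le \tfrac13 |u(t)|_{L^\infty(D)}$ throughout $[\tau_j,\tau_{j+1}]$, in which case Lemma \ref{lem:decay} gives the decay bound \eqref{eq:intro-decay} and the norm is actually \emph{decreasing} in a way that rules out tripling over short intervals unless $\tau_{j+1}-\tau_j$ is bounded below; or the stochastic convolution is not small, i.e.\ $\sup_{t\in[\tau_j,\tau_{j+1}]}|Z_j(t)|_{L^\infty(D)} > \tfrac13 |u(\tau_j)|_{L^\infty(D)}$ (roughly), which is the event I must show is rare.

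The heart of the argument is estimating $\Pro\big(\sup_{t \le \delta}|Z_j(t)|_{L^\infty(D)} > \tfrac13 |u(\tau_j)|_{L^\infty(D)} \mid \mathcal{F}_{\tau_j}\big)$ for small $\delta$. Using the factorization/maximal estimate \eqref{eq:intro-factor} with a well-chosen $\alpha \in (0, \tfrac{1-\eta}{2})$ and a large $p$, together with the bound $|\sigma(u(s))|_{L^\infty(D)} \le K_2(1+|u(s)|_{L^\infty(D)}^\gamma)$ and the uniform decay \eqref{eq:intro-decay} applied on $[\tau_j, \tau_j+\delta]$ (valid precisely on the complement of the bad event, so this needs a bootstrap/localization), the inner Beta integral $\int_0^t (t-s)^{-\eta-2\alpha} s^{-(\gamma-1)/(\beta-1)}\,ds$ is finite exactly because condition \eqref{eq:beta-gamma-condition} guarantees $\eta + 2\alpha + \tfrac{\gamma-1}{\beta-1} < 1$ for $\alpha$ small enough. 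This yields $\E \sup_{t\le\delta}|Z_j(t)|^p_{L^\infty} \le C\,\delta^{\kappa}\,(1 + |u(\tau_j)|_{L^\infty(D)}^p)$ for some $\kappa > 0$ — crucially \emph{linear} in the $p$th power of the initial norm, not superlinear, because the $|u|^{2\gamma}$ growth of $\sigma(u)^2$ is beaten down by the $s^{-(\gamma-1)/(\beta-1)}$ decay. Chebyshev then gives $\Pro(\text{bad event on }[\tau_j,\tau_j+\delta] \mid \mathcal F_{\tau_j}) \le C\delta^{\kappa} 3^p /\; \lesssim\; C' \delta^\kappa$, a bound \emph{uniform in $j$ and in} $u(\tau_j)$. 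So on each short interval of length $\delta$ the probability of a tripling is at most $C'\delta^\kappa$; a Borel--Cantelli / summation argument over the $\lceil T/\delta\rceil$ subintervals shows that with probability one only finitely many triplings occur in $[0,T]$ once $\delta$ is fixed small, hence $\sup_j \tau_j \ge T$ a.s., i.e.\ the solution does not explode before $T$. Letting $T\to\infty$ through integers and $n \to \infty$ finishes it.

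The main obstacle I anticipate is making the "restart" of the stochastic convolution and the application of Lemma \ref{lem:decay} rigorous on the random interval $[\tau_j,\tau_{j+1}]$: the decay estimate \eqref{eq:intro-decay} is only valid under the a priori smallness hypothesis $|Z(t)|_{L^\infty(D)} \le \tfrac13|u(t)|_{L^\infty(D)}$, so there is a circularity — one needs the smallness of $Z_j$ to get the decay of $u$, but one uses the decay of $u$ to prove the smallness of $Z_j$ via \eqref{eq:intro-factor}. Resolving this requires a careful stopping-time-within-a-stopping-time construction (stop $Z_j$ the first moment it violates the $\tfrac13$ bound, prove the decay up to that moment, feed it back) plus checking that the Markov/strong-Markov-type restarting of the convolution and the conditional moment estimates given $\mathcal{F}_{\tau_j}$ are legitimate — this is where the bulk of the technical work lies. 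Secondary care is needed near $u(\tau_j)=0$ and to ensure all constants ($C$, $\kappa$, the admissible range of $\alpha$ and $p$) are genuinely independent of $j$ and of the (random) value $u(\tau_j)$.
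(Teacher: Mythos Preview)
Your proposal is essentially the paper's own strategy: the tripling/thirding stopping times, the dichotomy via Lemma \ref{lem:decay}, the inner stopping time $\tilde\tau$ to break the circularity between the decay estimate and the smallness of $Z$, and the factorization bound yielding a conditional probability $\leq C\e^q$ uniform in the level $|u(\tau_j)|_{L^\infty(D)}$. You have also correctly anticipated the main technical issue (the bootstrap between Lemma \ref{lem:decay} and the stochastic convolution bound) and its resolution.

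The one place where your sketch is genuinely incomplete is the final assembly. A Borel--Cantelli argument ``over the $\lceil T/\delta\rceil$ subintervals'' does not work as stated: the triplings occur at random stopping times, not on a fixed grid, and a uniform bound $\Pro(\text{tripling in time}<\delta\mid\mathcal F_{\tau_j})\le C\delta^\kappa$ summed over all $j$ is infinite. The paper instead (i) takes $p$ large enough that the exponent $q=p(\alpha-\zeta/2)$ is strictly greater than $1$, (ii) applies the estimate with $\e=1/k$ at step $k$ so that $\sum_k\Pro(\text{tripling at step }k\text{ with }\tau_{k+1}-\tau_k<1/k)<\infty$, and then (iii) uses Borel--Cantelli to conclude that eventually every tripling step satisfies $\tau_{k+1}-\tau_k\ge 1/k$. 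The conclusion $\sum_k(\tau_{k+1}-\tau_k)=\infty$ then requires one more combinatorial observation: since the norm cannot drop below the fixed level $3c_0$, at least half of the steps past a certain index must be triplings, and a summation-by-parts argument shows that $\sum_k \tfrac{1}{k}\mathbbm{1}_{\{\text{step }k\text{ is a tripling}\}}$ diverges. Your sketch is missing both the need for $q>1$ (not merely $\kappa>0$) and this last divergence argument.
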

  The proof of Theorem \ref{thm:global-existence} is in Section \ref{S:proof-of-main}.
{\color{black}
\begin{remark}
  Notice that we make no claims about existence or uniqueness of mild solutions. Instead, we claim that if a local mild solution exists, then it cannot explode in finite time. If we add the reasonable assumption that $f$ and $\sigma$ are both locally Lipschitz continuous then a standard localization argument proves that there exists a unique local, mild solution (see for example \cite[Proof of Theorem 5.3]{c-2003}). We do not include this assumption to emphasize that it is really the tail behaviors \eqref{eq:f-assum}, \eqref{eq:sig-assum}, and the condition \eqref{eq:beta-gamma-condition} that prevent explosion.
\end{remark}
}
\section{Estimates} \label{S:estimates}
\subsection{Moment bounds of the supremum of the stochastic convolution}
By the factorization method of Da Prato and Zabczyk \cite[Chapter 5.3.1]{dpz-book} (see also \cite{c-2003}), a stochastic integral
\begin{equation} \label{eq:Z-stoch-conv}
  Z(t)= \int_0^t S(t-s)\sigma(u(s))dW(s)
\end{equation}
can be written as
\begin{equation} \label{eq:factor-Z}
  Z(t) = \frac{\sin(\pi \alpha)}{\pi} \int_0^t (t-s)^{\alpha-1} S(t-s) Z_\alpha(s)ds
\end{equation}
where $\alpha \in (0,1)$ and
\begin{equation}
  Z_\alpha(t) = \int_0^t (t-s)^{-\alpha} S(t-s)\sigma(u(s))dW(s).
\end{equation}

{\color{black}
Now let $\tau$ be a stopping time with respect to the natural filtration of $W(t)$. Using the factorization formula, $Z(t \wedge \tau)$ can be written as
\begin{equation} \label{eq:factor}
  Z(t \wedge \tau) = \frac{\sin(\pi \alpha)}{\pi} \int_0^{t \wedge \tau} (t \wedge \tau -s)^{\alpha -1} S(t \wedge \tau -s) Z_\alpha(s)ds.
\end{equation}
This is also equal to
\begin{equation} \label{eq:factor-tilde}
  Z(t \wedge \tau) = \frac{\sin(\pi \alpha)}{\pi}\int_0^{t \wedge \tau} (t \wedge \tau -s)^{\alpha -1} S(t \wedge \tau -s) \tilde Z_{\alpha}(s)ds
\end{equation}
where
\begin{equation}
  \tilde Z_\alpha(t) = \int_0^t (t-s)^{-\alpha} S(t-s) \sigma(u(s ))\mathbbm{1}_{\{s \leq \tau\}}dW(s).
\end{equation}
Expressions \eqref{eq:factor} and \eqref{eq:factor-tilde} are equal because $Z_\alpha(t) = \tilde Z_\alpha(t)$ for all $t \leq \tau$.

We prove the following two propositions in the appendix.
\begin{proposition} \label{prop:Z-alpha}
  Let $\alpha \in \left( 0, \frac{1-\eta}{2}\right)$ and  $p\geq 2$. For any $t>0$,
  \begin{equation} \label{eq:BDG-stoch-conv}
  \E |\tilde{Z}_\alpha(t)|_{L^p(D)}^p \leq C_{\alpha,p} \E \left(\int_0^t (t-s)^{-\eta - 2\alpha} |\sigma(u(s))|_{L^\infty(D)}^2 \mathbbm{1}_{\{s \leq \tau\}}ds  \right)^{\frac{p}{2}}
  \end{equation}
\end{proposition}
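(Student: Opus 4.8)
The plan is to reduce to a scalar stochastic integral by evaluating at a fixed point $x\in D$, apply the one-dimensional Burkholder--Davis--Gundy inequality there, and then integrate over the bounded domain $D$. Fix $t>0$ and $x\in D$, write $G(r,x,y)$ for the kernel of the Dirichlet semigroup $S(r)$ (for $r>0$ this maps into continuous functions, so pointwise evaluation is legitimate), and use the expansion $\dot W=\sum_j\lambda_j e_j\,dB_j$ from Assumption~\ref{assum:W}. Then $\tilde Z_\alpha(t)(x)$ is the value at time $t$ of the continuous martingale $r\mapsto\sum_j\int_0^r(t-s)^{-\alpha}\lambda_j\big([S(t-s)(\sigma(u(s))e_j)](x)\big)\mathbbm{1}_{\{s\le\tau\}}\,dB_j(s)$, whose integrand is predictable and square-integrable in $s$ on $[0,t]$ because the hypothesis $\alpha<\tfrac{1-\eta}{2}$ (in particular $\alpha<\tfrac12$) makes the relevant time singularities integrable. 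The scalar Burkholder--Davis--Gundy inequality (equivalently the $L^p$ bound for stochastic integrals, as in \cite[Chapter 5.3]{dpz-book} or \cite{c-2003}), with a constant depending only on $p$, then gives
\[
  \E|\tilde Z_\alpha(t)(x)|^p \le C_p\,\E\left(\int_0^t (t-s)^{-2\alpha}\,Q(t-s,s,x)\,\mathbbm{1}_{\{s\le\tau\}}\,ds\right)^{\frac p2},
  \qquad Q(r,s,x):=\sum_{j}\lambda_j^2\big([S(r)(\sigma(u(s))e_j)](x)\big)^2.
\]

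The heart of the argument is the pointwise estimate $Q(r,s,x)\le C\,r^{-\eta}\,|\sigma(u(s))|_{L^\infty(D)}^2$, uniformly in $x\in D$ and $r>0$; this is exactly what manufactures the extra singular factor $(t-s)^{-\eta}$. I would prove it by Hölder's inequality in the index $j$ with conjugate exponents $\rho/2$ and $\rho/(\rho-2)$ (the cases $\rho=2$ and $\rho=+\infty$ being the trivial and limiting versions). For the $\rho/2$-factor one uses that $S(r)$ has a sub-Markovian kernel, $\int_D G(r,x,y)\,dy\le 1$, so $|[S(r)(\sigma(u(s))e_j)](x)|\le|\sigma(u(s))|_{L^\infty(D)}\,|e_j|_{L^\infty(D)}$ and hence, by \eqref{eq:lambda-sum-assum}, $\sum_j\lambda_j^\rho\big([S(r)(\sigma(u(s))e_j)](x)\big)^2\le|\sigma(u(s))|_{L^\infty(D)}^2\sum_j\lambda_j^\rho|e_j|_{L^\infty(D)}^2<\infty$. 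For the other factor, since $\{e_j\}$ is an orthonormal basis of $L^2(D)$ and $[S(r)(\sigma(u(s))e_j)](x)=\langle G(r,x,\cdot)\sigma(u(s)),e_j\rangle_{L^2(D)}$, Parseval's identity and Chapman--Kolmogorov give $\sum_j\big([S(r)(\sigma(u(s))e_j)](x)\big)^2=|G(r,x,\cdot)\sigma(u(s))|_{L^2(D)}^2\le|\sigma(u(s))|_{L^\infty(D)}^2\sum_k e^{-2\alpha_k r}e_k(x)^2$; since \eqref{eq:alpha-sum} forces $\alpha_k>0$ for every $k$, the elementary bound $e^{-2\alpha_k r}\le C_\theta(\alpha_k r)^{-\theta}$ and \eqref{eq:alpha-sum} yield $\sum_k e^{-2\alpha_k r}e_k(x)^2\le C\,r^{-\theta}$ uniformly in $x$. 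Combining the two factors through Hölder produces $Q(r,s,x)\le C\,|\sigma(u(s))|_{L^\infty(D)}^{4/\rho+2(\rho-2)/\rho}\,r^{-\theta(\rho-2)/\rho}=C\,|\sigma(u(s))|_{L^\infty(D)}^2\,r^{-\eta}$ by \eqref{eq:eta-def}.

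Substituting this estimate into the BDG bound gives, with a constant independent of $x$,
\[
  \E|\tilde Z_\alpha(t)(x)|^p \le C_{\alpha,p}\,\E\left(\int_0^t (t-s)^{-\eta-2\alpha}\,|\sigma(u(s))|_{L^\infty(D)}^2\,\mathbbm{1}_{\{s\le\tau\}}\,ds\right)^{\frac p2},
\]
and integrating over the bounded set $D$ (absorbing $|D|$ into the constant, using Tonelli to exchange $\E$ and $\int_D$) yields \eqref{eq:BDG-stoch-conv}. The one genuinely delicate step is the uniform-in-$x$ control of $Q(r,s,x)$: everything hinges on splitting the weights $\lambda_j^2$ through the exponent pair $(\rho/2,\rho/(\rho-2))$ so that precisely the summable quantities \eqref{eq:lambda-sum-assum} and \eqref{eq:alpha-sum} appear, with the interpolation converting $\theta$ into $\eta=\theta(\rho-2)/\rho$ and simultaneously restoring the full power $|\sigma(u(s))|_{L^\infty(D)}^2$; a minor bookkeeping point is to keep in mind that the scalar BDG inequality is applied at the fixed time $t$ and not to a supremum over time.
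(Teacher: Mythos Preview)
Your proof is correct and follows essentially the same route as the paper: fix $x\in D$, apply the scalar BDG inequality, estimate the quadratic variation via H\"older in $j$ with the exponent pair $(\rho/2,\rho/(\rho-2))$ so that the two factors are controlled respectively by \eqref{eq:lambda-sum-assum} (using the contraction property of the kernel) and by Parseval together with \eqref{eq:alpha-sum}, producing the singularity $r^{-\theta(\rho-2)/\rho}=r^{-\eta}$, and finally integrate the uniform-in-$x$ bound over $D$. The only cosmetic differences are notational (you name the quadratic-variation density $Q(r,s,x)$ and write $G$ for the kernel where the paper writes $K$).
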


\begin{proposition} \label{prop:Z-sup-bound}
  Let $\tau$ be a stopping time with respect to the natural filtration of $W(t)$. If $\E\sup_{t \in [0,\tau]}|u(t)|^p_{L^\infty(D)}<+\infty$, then $(t,x) \mapsto Z(t \wedge \tau, x)$ is almost surely continuous. Furthermore, for any $\alpha \in \left( 0, \frac{1-\eta}{2} \right)$, $\zeta \in (0,2\alpha)$ and  $p> \max\left\{ \frac{d}{\zeta}, \frac{1}{\alpha-\frac{\zeta}{2}}\right\}$,
  \begin{equation} \label{eq:stoch-conv-sup-norm}
    \E \sup_{s \in [0,t]} \sup_{x \in D} |Z(s,x)|^p \leq C_{\alpha,\zeta,p} t^{p(\alpha-\frac{\zeta}{2}) -1} \int_0^t \E|\tilde{Z}_\alpha(s)|_{L^p(D)}^pds.
  \end{equation}
\end{proposition}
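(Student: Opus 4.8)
The plan is to use the factorization identity \eqref{eq:factor-tilde}, which exhibits $Z(\cdot\wedge\tau)$ as the image of the single process $\tilde Z_\alpha$ under the deterministic linear operator $R_\alpha g(r):=\int_0^r (r-s)^{\alpha-1}S(r-s)g(s)\,ds$, and to establish a pathwise bound for $\sup_{r\in[0,t]}|R_\alpha g(r)|_{C_0(\bar D)}$ in terms of $\big(\int_0^t |g(s)|_{L^p(D)}^p\,ds\big)^{1/p}$. Since \eqref{eq:factor-tilde} gives $Z(s\wedge\tau)=R_\alpha(\tilde Z_\alpha)(s\wedge\tau)$, so that $\sup_{s\in[0,t]}|Z(s\wedge\tau)|_{C_0(\bar D)}\le \sup_{r\in[0,t]}|R_\alpha(\tilde Z_\alpha)(r)|_{C_0(\bar D)}$, choosing $g=\tilde Z_\alpha$, raising to the $p$th power and taking expectations will then yield \eqref{eq:stoch-conv-sup-norm}.

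The only analytic ingredient is the smoothing property of the analytic semigroup generated by $A$: for every $\delta>0$ there is a constant $C_\delta$ with $|(-A)^\delta S(\sigma)v|_{L^p(D)}\le C_\delta\,\sigma^{-\delta}\,|v|_{L^p(D)}$ for $\sigma\in(0,1]$ (with a bounded factor for $\sigma\ge1$), together with the embedding $D((-A)^{\zeta/2})\hookrightarrow W^{\zeta,p}(D)\hookrightarrow C_0(\bar D)$, which holds exactly when $\zeta p>d$, i.e.\ $p>d/\zeta$. Composing these gives $|S(\sigma)v|_{C_0(\bar D)}\le C\sigma^{-\zeta/2}|v|_{L^p(D)}$, and hence for $r\in[0,t]$,
\[
  |R_\alpha g(r)|_{C_0(\bar D)}\le C\int_0^r (r-s)^{\alpha-1-\frac{\zeta}{2}}\,|g(s)|_{L^p(D)}\,ds .
\]
Hölder's inequality with exponents $p'=\frac{p}{p-1}$ and $p$ bounds the right-hand side by $C\big(\int_0^r \rho^{(\alpha-1-\frac{\zeta}{2})p'}\,d\rho\big)^{1/p'}\big(\int_0^t|g(s)|_{L^p(D)}^p\,ds\big)^{1/p}$. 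The first factor is finite precisely when $(\alpha-1-\frac{\zeta}{2})p'>-1$, which is equivalent to $\frac{\zeta}{2}<\alpha-\frac1p$, i.e.\ $p>\frac{1}{\alpha-\zeta/2}$; in that case it equals a constant times $r^{\alpha-\frac{\zeta}{2}-\frac1p}$, and since the exponent $\alpha-\frac{\zeta}{2}-\frac1p$ is positive this is at most a constant times $t^{\alpha-\frac{\zeta}{2}-\frac1p}$. Taking the supremum over $r\in[0,t]$, raising to the power $p$ (noting $p(\alpha-\frac{\zeta}{2}-\frac1p)=p(\alpha-\frac{\zeta}{2})-1$) and then taking expectations produces \eqref{eq:stoch-conv-sup-norm}.

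For the continuity assertion, observe that under the hypothesis $\E\sup_{t\in[0,\tau]}|u(t)|_{L^\infty(D)}^p<+\infty$, Proposition~\ref{prop:Z-alpha} combined with the polynomial bound \eqref{eq:sig-assum} and the finiteness of $\int_0^t(t-s)^{-\eta-2\alpha}\,ds$ (which holds because $2\alpha+\eta<1$) shows that $\int_0^t\E|\tilde Z_\alpha(s)|_{L^p(D)}^p\,ds<+\infty$; consequently $\tilde Z_\alpha(\cdot,\omega)\in L^p((0,t);L^p(D))$ for almost every $\omega$. Joint continuity of $(r,x)\mapsto R_\alpha(\tilde Z_\alpha)(r)(x)=Z(r\wedge\tau,x)$ then follows from the standard factorization argument, cf.\ \cite[Chapter 5.3.1]{dpz-book}: continuity in $x$ comes from the embedding into $C_0(\bar D)$ just used, and continuity in $r$ follows from a Hölder-in-time estimate for $R_\alpha g$, obtained by splitting $R_\alpha g(r+h)-R_\alpha g(r)$ into the contribution of $(r,r+h)$ and the integral over $(0,r)$ of the difference of the two kernels, and invoking dominated convergence together with the same integrability condition $\frac{\zeta}{2}<\alpha-\frac1p$.

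I expect the main point requiring care to be the reconciliation of the two competing constraints on $\zeta$ and $p$: the Sobolev embedding forces $\zeta p>d$ (so $\zeta$ cannot be too small), whereas integrability of the Beta-type kernel forces $\frac{\zeta}{2}<\alpha-\frac1p$ (so $\zeta$ cannot be too large), and these are simultaneously achievable exactly when $\zeta\in(0,2\alpha)$ and $p>\max\{d/\zeta,\,1/(\alpha-\zeta/2)\}$ --- which explains the form of the hypotheses. The remaining points (the vector-valued integral being well defined, $S(\sigma)$ mapping $L^p(D)$ into functions vanishing on $\partial D$, and the time-regularity of $R_\alpha g$) are routine.
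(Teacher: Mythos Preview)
Your proposal is correct and follows essentially the same approach as the paper's proof: the factorization identity \eqref{eq:factor-tilde}, the semigroup smoothing bound $|S(\sigma)v|_{C_0(\bar D)}\le C\sigma^{-\zeta/2}|v|_{L^p(D)}$ obtained via Sobolev embedding (the paper states this directly as an $L^p\to W^{\zeta,p}$ estimate \eqref{eq:semigroup-sobolev} rather than routing through $D((-A)^{\zeta/2})$, but this is cosmetic), and H\"older's inequality to produce the factor $t^{p(\alpha-\zeta/2)-1}$. For the continuity assertion the paper simply cites \cite[Proposition 5.9]{dpz-book}, which is the same standard factorization argument you outline in more detail.
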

The proofs of Propositions \ref{prop:Z-alpha} and \ref{prop:Z-sup-bound} are very similar to the proofs of \cite[Theorem 4.2]{c-2003} and \cite[Lemma 4.1]{c-2009}, although the inclusion of the stopping time is slightly different. For completeness, we include the proofs in the appendix.
}
%
%

\subsection{Uniform bounds}
Before directly analyzing the properties of the mild solution \eqref{eq:mild}, we consider an associated deterministic problem.
Let $z \in C_0([0,T]\times \bar D)$ be a continuous function of space and time and  assume that $u(t)$ solves the integral equation (with the spatial variable suppressed )
\begin{equation} \label{eq:determ-int-eq}
  u(t) = S(t)u_0 + \int_0^t S(t-s)f(u(s))ds + z(t)
\end{equation}
If $u(t)$ is a mild solution to \eqref{eq:SRDE}, then $u(t)$ satisfies \eqref{eq:determ-int-eq} where $z(t)$ is replaced with the stochastic convolution \eqref{eq:Z-stoch-conv}.

\begin{lemma} \label{lem:decay}
  Assume that $z,u \in {C_0([0,T]\times D)}$  solve \eqref{eq:determ-int-eq}. If
  \begin{equation}
    |z(t)|_{L^\infty(D)} \leq \frac{1}{3} |u(t)|_{L^\infty(D)} \text{ and } 3c_0 < |u(t)|_{L^\infty(D)} \text{ for all $t \in [0,T]$,}
  \end{equation}
  then for all $t \in [0,T]$,
  \begin{equation}
    |u(t)|_{L^\infty(D)} \leq \frac{3}{2}\left( |u(0)|_{L^\infty(D)}^{-(\beta-1)} + \frac{K_1}{2^\beta(\beta-1)} t \right)^{-\frac{1}{\beta -1}}.
  \end{equation}
\end{lemma}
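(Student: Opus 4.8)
The strategy is to pass to $\psi(t) := u(t) - z(t)$, which by \eqref{eq:determ-int-eq} is a mild solution of the deterministic Dirichlet problem $\partial_t\psi = \mathcal{A}\psi + f(u)$, $\psi = 0$ on $\partial D$, $\psi(0) = u_0$, and to show that its $L^\infty$ norm decays like the solution $\phi$ of the scalar ODE $\phi' = -\tfrac{K_1}{2^\beta}\phi^\beta$ with $\phi(0) = |u_0|_{L^\infty(D)}$. Writing $M(t) := |u(t)|_{L^\infty(D)}$ and $N(t) := |\psi(t)|_{L^\infty(D)}$, the hypothesis $|z(t)|_{L^\infty(D)} \le \tfrac13 M(t)$ and the triangle inequality give $\tfrac23 M(t) \le N(t) \le \tfrac43 M(t)$; in particular $M(t) \le \tfrac32 N(t)$ and $N(t) > 2c_0$ for all $t \in [0,T]$. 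Thus it suffices to prove $N(t) \le \phi(t)$, since then $M(t) \le \tfrac32 N(t) \le \tfrac32\phi(t)$, and solving the ODE explicitly turns this into the asserted estimate.

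The heart of the argument is an estimate at a point where $\psi(t,\cdot)$ attains its sup-norm. Fix $t \in [0,T]$ and $x^* \in \bar D$ with $|\psi(t,x^*)| = N(t)$; since $\psi(t,\cdot)$ vanishes on $\partial D$ and $N(t) > 2c_0 > 0$, the point $x^*$ is interior. Assume $\psi(t,x^*) = N(t)$ (the case $\psi(t,x^*) = -N(t)$ is symmetric, with $u(t,x^*) \le -\tfrac12 N(t) < -c_0$). Then, using $|z(t,x^*)| \le \tfrac13 M(t) \le \tfrac12 N(t)$,
\[
  u(t,x^*) = N(t) + z(t,x^*) \ge \tfrac12 N(t) > c_0,
\]
so \eqref{eq:f-assum} yields $f(u(t,x^*)) \le -K_1 (u(t,x^*))^\beta \le -\tfrac{K_1}{2^\beta} N(t)^\beta$. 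Now run a parabolic comparison argument: given $\delta\in(0,1)$, let $\phi_\delta$ solve $\phi_\delta' = -\tfrac{K_1}{2^\beta}(1-\delta)\phi_\delta^\beta$ with $\phi_\delta(0) = |u_0|_{L^\infty(D)} + \delta$, so that $\phi_\delta > 0$ on $[0,T]$ and $\phi_\delta \to \phi$ as $\delta\downarrow 0$. If $|\psi| < \phi_\delta$ failed on $[0,T]\times\bar D$, take the first time $t_*$ and a point $x_*$ with $|\psi(t_*,x_*)| = \phi_\delta(t_*)$; by continuity $t_* > 0$ and $|\psi(t_*,\cdot)| \le \phi_\delta(t_*)$ on $\bar D$, so $N(t_*) = \phi_\delta(t_*)$ and $x_*$ is an interior spatial extremum of $\psi(t_*,\cdot)$. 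In the case $\psi(t_*,x_*) = N(t_*)$: the ellipticity of $(a_{ij})$ together with the necessary conditions at an interior spatial maximum gives $\mathcal{A}\psi(t_*,x_*) \le 0$, while $t\mapsto \phi_\delta(t) - \psi(t,x_*)$ is positive before $t_*$ and vanishes at $t_*$, so $\partial_t\psi(t_*,x_*) \ge \phi_\delta'(t_*)$; hence
\[
  \phi_\delta'(t_*) \le \partial_t\psi(t_*,x_*) = \mathcal{A}\psi(t_*,x_*) + f(u(t_*,x_*)) \le -\tfrac{K_1}{2^\beta}N(t_*)^\beta = -\tfrac{K_1}{2^\beta}\phi_\delta(t_*)^\beta,
\]
contradicting $\phi_\delta'(t_*) = -\tfrac{K_1}{2^\beta}(1-\delta)\phi_\delta(t_*)^\beta$. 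Letting $\delta\downarrow 0$ gives $N(t)\le\phi(t)$.

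The point that needs care is that the mild solution $\psi$ need not be classically differentiable in $t$ nor twice differentiable in $x$, so the extremum computation above is not literally licit — this is the main technical obstacle. It is handled in the standard way: approximate $u_0$ and $s\mapsto f(u(s))$ by smooth data so that the corresponding strict solutions $\psi_\e$ converge uniformly on $[0,T]\times\bar D$ to $\psi$, run the comparison argument for each $\psi_\e$ (the extremal estimate is stable under these uniform perturbations, and the slack parameter $\delta$ absorbs the resulting errors once $\e$ is small relative to $\delta$), and pass to the limit; equivalently, one invokes the comparison/maximum principle for mild solutions of parabolic Dirichlet problems, as in \cite{c-2003}. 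An alternative that sidesteps the regularity issue is to test the equation for $\psi$ against $|\psi|^{p-2}\psi$, use $\int_D |\psi|^{p-2}\psi\,\mathcal{A}\psi\,dx \le 0$ together with \eqref{eq:f-assum}, and send $p\to\infty$, but the maximum-principle formulation appears cleaner to carry out.
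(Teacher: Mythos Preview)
Your approach is essentially the paper's: both set $v=u-z$ (your $\psi$), derive the same pointwise bound $f(u)\,\sgn(v)\le -\tfrac{K_1}{2^\beta}|v|_{L^\infty}^\beta$ at the interior spatial extremum via the triangle-inequality estimates $\tfrac23|u|_{L^\infty}\le|v|_{L^\infty}$ and $|v+z|\ge\tfrac12|v|_{L^\infty}>c_0$, compare with the scalar ODE, and handle the regularity gap by approximation to strong solutions. The only packaging difference is that the paper invokes the subdifferential formula $\frac{d^-}{dt}|v(t)|_{L^\infty}\le \mathcal{A}v(t,x_t)\sgn(v(t,x_t))+f(u(t,x_t))\sgn(v(t,x_t))$ from \cite[Proposition~D.4]{dpz-book} (after a Yosida approximation) and integrates it directly, whereas you run an equivalent first-touching-time comparison against the perturbed barrier $\phi_\delta$.
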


\begin{proof}
  Assume that $z(t)$ and $u(t)$ are as described above. Let $v(t) = u(t) - z(t) = S(t)u_0 + \int_0^t S(t-s)f(v(s)+z(s))ds$.
Then $v$ is weakly differentiable and weakly solves the partial differential equation
\begin{equation} \label{eq:v-pde}
  \frac{\partial v}{\partial t}(t,x) = \mathcal{A} v(t,x) + f(v(t,x) + z(t,x)).
\end{equation}
By a standard Yosida approximation argument (see Proposition 6.2.2 of \cite{cerrai-book} or Theorem 7.7 of \cite{dpz-book}), we can assume without loss of generality that $v$ is a strong solution of \eqref{eq:v-pde}.
By Proposition D.4 in the appendix of \cite{dpz-book}, $t \mapsto |v(t)|_{L^\infty(D)}$ is left-differentiable and
\begin{equation} \label{eq:sup-ode}
  \frac{d^-}{dt} |v(t)|_{L^\infty(D)} \leq \mathcal{A} v(t,x_t) \sgn(v(t,x_t)) + f(v(t,x_t) + z(t,x_t))\sgn(v(t,x_t))
\end{equation}
where $x_t \in D$ is a maximizer satisfying
\begin{equation}
  |v(t)|_{L^\infty(D)} = |v(t,x_t)| = v(t,x_t)\sgn(v(t,x_t)).
\end{equation}

Because $\mathcal{A}$ is elliptic, by the convexity of a function at its maximum or minimum,
\begin{equation} \label{eq:A-dissip}
  \mathcal{A}v(t,x_t) \sgn(v(t,x_t)) \leq 0.
\end{equation}
By the triangle inequality,
\begin{equation}
  |v(t)|_{L^\infty(D)} \geq |u(t)|_{L^\infty(D)} - |z(t)|_{L^\infty(D)}.
\end{equation}
By assumption, for $t \in [0,T]$, $3|z(t)|_{L^\infty(D)} \leq |u(t)|_{L^\infty(D)}$, and therefore,
\begin{equation} \label{eq:v-double-z}
  |v(t)|_{L^\infty(D)} \geq 2 |z(t)|_{L^\infty(D)}.
\end{equation}
From these estimates it follows that
\begin{equation} \label{eq:v-lower-bound}
  |v(t,x_t) + z(t,x_t)| \geq |v(t)|_{L^\infty(D)} - |z(t)|_{L^\infty(D)} \geq \frac{1}{2}|v(t)|_{L^\infty(D)}.
\end{equation}
And by the assumption that $|u(t)|_{L^\infty(D)}>3c_0$,
we see that
\begin{equation} \label{eq:v-lower-bound-2}
  |v(t)|_{L^\infty(D)} \geq |u(t)|_{L^\infty(D)} - |z(t)|_{L^\infty(D)} \geq \frac{2}{3} |u(t)|_{L^\infty(D)} \geq 2 c_0.
\end{equation}
Therefore, by \eqref{eq:v-lower-bound} for $t \in [0,T]$,
\begin{equation}
  |v(t,x_t) + z(t,x_t)| \geq \frac{1}{2}|v(t)|_{L^\infty(D)} \geq c_0.
\end{equation}
By \eqref{eq:v-double-z}, $\sgn(v(t,x_t)) = \sgn(v(t,x_t) + z(t,x_t))$. Therefore,
by \eqref{eq:f-assum}, \eqref{eq:sup-ode},  \eqref{eq:A-dissip}, and \eqref{eq:v-lower-bound},  for $t \in [0,T]$,
\begin{equation}
  \frac{d^-}{dt}|v(t)|_{L^\infty(D)} \leq -\frac{K_1}{2^\beta} |v(t)|_{L^\infty(D)}^\beta.
\end{equation}

Let $F(u) = - (\beta -1)u^{-(\beta -1)}$ so that $F'(u) = u^{-\beta}$. Then because $F$ is differentiable and increasing,
\begin{equation}
  \frac{d^-}{dt} F(|v(t)|_{L^\infty(D)}) \leq -\frac{K_1}{2^\beta}
\end{equation}
and therefore,
\begin{equation}
  F(|v(t)|_{L^\infty(D)}) \leq F(|u(0)|_{L^\infty(D)}) -\frac{K_1t}{2^\beta}
\end{equation}
and
\begin{equation}
  |v(t)|_{L^\infty(D)} \leq \left(|u(0)|_{L^\infty(D)}^{-(\beta-1)} + \frac{K_1 t}{2^\beta(\beta-1)} \right)^{-\frac{1}{\beta-1}}.
\end{equation}
Finally, we use the estimate \eqref{eq:v-lower-bound-2} to see that
\begin{equation}
  |u(t)|_{L^\infty(D)} \leq \frac{3}{2}|v(t)|_{L^\infty(D)} \leq \frac{3}{2}\left(|u(0)|_{L^\infty(D)}^{-(\beta-1)} + \frac{K_1 t}{2^\beta(\beta-1)} \right)^{-\frac{1}{\beta-1}}.
\end{equation}

\end{proof}

\section{Proof of Theorem \ref{thm:global-existence}} \label{S:proof-of-main}
To prove that mild solutions to \eqref{eq:SRDE} are global in time, we build a sequence of stopping times. Let $u(t)$ be a mild solution to \eqref{eq:SRDE}. Let $c_0$ be the constant from Assumption \ref{assum:f-sig} and let
\begin{align}\label{eq:tau-def}
    &\tau_0 = \inf\{t\geq 0: |u(t)|_{L^\infty(D)} = 3^n c_0 \text{ for some } n \in \{1,2,3,4,...\}\} \nonumber\\
    &\tau_{k+1} =
       \begin{cases}
         \inf\{t \geq \tau_k: |u(t)|_{L^\infty(D)} \geq 3^2 c_0 \} & \text{ if } |u(\tau_k)|_{L^\infty(D)} = 3c_0\\
         \inf\{t \geq \tau_k: |u(t)|_{L^\infty(D)} \geq 3|u(\tau_k)|_{L^\infty(D)} \\
         \hspace{1.3cm} \text{ or } |u(t)|_{L^\infty(D)} \leq \frac{1}{3}|u(\tau_k)|_{L^\infty(D)}\} & \text{ if } |u(\tau_k)|_{L^\infty(D)} \geq 3^2 c_0.
       \end{cases}
\end{align}

If mild solutions explode in finite time then $|u(\tau_k)|_{L^\infty(D)}$ will diverge to $+\infty$ while $\sup_k \tau_k< +\infty$. We will demonstrate that this cannot happen.

\begin{lemma}  \label{lem:cond-prob-grow-quick}
  There exist constants $C>0$ and $q>1$, independent of  $n$, $k$, and $\e>0$, such that for any $\e>0$, \textcolor{black}{any $k \in \mathbb{N}$,} and any $n \in \{2,3,4,5,6,...\}$,
  {\color{black}
  \begin{equation}
    \Pro \left( |u(\tau_{k+1})|_{L^\infty(D)} = 3 |u(\tau_k)|_{L^\infty(D)} \text{ and } \tau_{k+1} - \tau_k<\e \ \Big| \  |u(\tau_k)| = 3^n c_0\right) \leq C \e^{q}.
  \end{equation}
}
\end{lemma}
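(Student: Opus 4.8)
The plan is to reduce to $k=0$ by restarting the equation at $\tau_k$, and then to combine the scale–invariant dissipation estimate of Lemma~\ref{lem:decay} with the stochastic–convolution bounds of Propositions~\ref{prop:Z-alpha}--\ref{prop:Z-sup-bound}. The heuristic is: if the solution is to triple within time $<\e$, then the stochastic convolution $Z$ must first have grown to a fixed fraction of the current scale, and the probability of that is $O(\e^q)$. The one genuinely delicate point — and the only place condition \eqref{eq:beta-gamma-condition} is used — is that the constant $C$ must be \emph{independent of the scale $3^nc_0$}: a crude growth bound $|\sigma(u)|\lesssim(3^nc_0)^\gamma$ would produce a divergent factor, so one must feed the scale–independent decay rate of Lemma~\ref{lem:decay} into the moment estimate for $Z$.

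Concretely, after restarting at $\tau_k$ it suffices to prove the following. Let $m:=3^nc_0$ with $n\ge 2$ (so $m\ge 9c_0$), let $u$ be a mild solution with $|u(0)|_{L^\infty(D)}=m$, and set $\tau:=\inf\{t\ge0:|u(t)|_{L^\infty(D)}\ge 3m \text{ or } |u(t)|_{L^\infty(D)}\le m/3\}$. Then there are $C>0$, $q>1$, depending only on the constants in Assumptions~\ref{assum:A}--\ref{assum:f-sig}, such that $\Pro(|u(\tau)|_{L^\infty(D)}=3m,\ \tau<\e)\le C\e^q$; we may assume $\e\le1$ (otherwise enlarge $C$). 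Introduce $Z(t):=\int_0^tS(t-s)\sigma(u(s))dW(s)$, the stopping time $\mu:=\inf\{t\ge0:|Z(t\wedge\tau)|_{L^\infty(D)}\ge m/9\}$, and $\vartheta:=\tau\wedge\mu\wedge\e$; since $|u(t)|_{L^\infty(D)}\le 3m$ on $[0,\tau]$, Proposition~\ref{prop:Z-sup-bound} (with stopping time $\tau$) gives that $t\mapsto Z(t\wedge\tau)$ is a.s.\ continuous.

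On $[0,\tau\wedge\mu)$ we have $|Z(t)|_{L^\infty(D)}<m/9\le\tfrac13|u(t)|_{L^\infty(D)}$ and $|u(t)|_{L^\infty(D)}>m/3\ge 3c_0$ (here $n\ge2$ is used), so Lemma~\ref{lem:decay} applied on $[0,t]$ for each $t<\tau\wedge\mu$ yields, with $\kappa:=\tfrac{K_1}{2^\beta(\beta-1)}$,
\[ |u(t)|_{L^\infty(D)}\le\tfrac32\bigl(m^{-(\beta-1)}+\kappa t\bigr)^{-1/(\beta-1)}\le\tfrac32 m. \]
Consequently, on $\{|u(\tau)|_{L^\infty(D)}=3m,\ \tau<\e\}$ one must have $\mu<\tau$ — otherwise $\tau\wedge\mu=\tau$ and, letting $t\uparrow\tau$, the bound forces $|u(\tau)|_{L^\infty(D)}\le\tfrac32 m<3m$ — hence $\vartheta=\mu<\e$ and, by continuity, $|Z(\vartheta)|_{L^\infty(D)}=m/9$. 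Therefore
\[ \bigl\{|u(\tau)|_{L^\infty(D)}=3m,\ \tau<\e\bigr\}\subseteq\Bigl\{\sup_{t\in[0,\e]}\sup_{x\in D}|Z(t\wedge\vartheta,x)|\ge m/9\Bigr\}. \]
Now Markov's inequality with a parameter $p$ and Propositions~\ref{prop:Z-sup-bound} and~\ref{prop:Z-alpha} (with stopping time $\vartheta$; the moment hypothesis holds since $|u|_{L^\infty(D)}\le 3m$ on $[0,\vartheta]$) bound this probability by
\[ \Bigl(\tfrac9m\Bigr)^p C_{\alpha,\zeta,p}\,\e^{p(\alpha-\zeta/2)-1}\int_0^\e\E\Bigl(\int_0^s(s-r)^{-\eta-2\alpha}|\sigma(u(r))|_{L^\infty(D)}^2\mathbbm{1}_{\{r\le\vartheta\}}dr\Bigr)^{p/2}ds, \]
where $\alpha\in(0,\tfrac{1-\eta}{2})$, $\zeta\in(0,2\alpha)$, $p>\max\{2,d/\zeta,(\alpha-\zeta/2)^{-1}\}$. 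On $\{r\le\vartheta\}$ the decay bound applies, and interpolating $(m^{-(\beta-1)}+\kappa r)^{-1}\le m^{(\beta-1)/\gamma}(\kappa r)^{-(\gamma-1)/\gamma}$ and raising to the power $2\gamma/(\beta-1)$ gives $|u(r)|_{L^\infty(D)}^{2\gamma}\le(\tfrac32)^{2\gamma}m^2(\kappa r)^{-b}$ with $b:=\tfrac{2(\gamma-1)}{\beta-1}$, hence $|\sigma(u(r))|_{L^\infty(D)}^2\le 2K_2^2(1+|u(r)|_{L^\infty(D)}^{2\gamma})\le C(1+m^2r^{-b})$.

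Condition \eqref{eq:beta-gamma-condition} is precisely $b<1-\eta$, so we may fix $\zeta\in(0,1-\eta-b)$ and then $\alpha\in(\zeta/2,\tfrac{1-\eta-b}{2})$; with these choices $\eta+2\alpha<1$ and $b<1$, the Beta integral $\int_0^s(s-r)^{-\eta-2\alpha}r^{-b}dr$ equals $Cs^{1-\eta-2\alpha-b}$, and (absorbing the $r$–free term using $s\le\e\le1$ and $1-\eta-2\alpha-b>0$) the inner expectation is at most $C(1+m^2)s^{1-\eta-2\alpha-b}$. Integrating in $s$ and collecting exponents leaves a bound $(9/m)^p C^p(1+m^2)^{p/2}\e^{\frac p2(1-\eta-b-\zeta)}$; because $m\ge 9c_0$ we have $(1+m^2)/m^2\le 1+(9c_0)^{-2}$, the scale cancels, and we obtain $C\e^q$ with $q:=\tfrac p2(1-\eta-b-\zeta)$. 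Since $1-\eta-b-\zeta>0$, choosing $p$ large enough (compatibly with the constraints above) makes $q>1$, which finishes the proof. The main obstacle throughout is exactly this cancellation of the factor $m^{-p}$ from Markov against the $m^2$ coming from $\sigma$ via the interpolation weight $(\gamma-1)/\gamma$ — and it is only the inequality $b=\tfrac{2(\gamma-1)}{\beta-1}<1-\eta$ that simultaneously makes the time integral converge and leaves a strictly positive power of $\e$ to spare.
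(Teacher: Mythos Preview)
Your argument is correct and follows essentially the same route as the paper's proof: the same Markov reduction to $k=0$, the same auxiliary stopping time (your $\mu$ at threshold $m/9$ is exactly the paper's $\tilde\tau_1$ at $3^{n-2}c_0$), the same use of Lemma~\ref{lem:decay} to force the stochastic convolution across that threshold before tripling can occur, the same Chebyshev/factorization via Propositions~\ref{prop:Z-alpha}--\ref{prop:Z-sup-bound}, and the same key interpolation $(m^{-(\beta-1)}+\kappa r)^{-2\gamma/(\beta-1)}\le Cm^2 r^{-b}$. The only cosmetic difference is that the paper picks $\alpha$ so that $2\alpha+\eta+b=1$ exactly (making the Beta integral a constant), while you take $2\alpha+\eta+b<1$ and carry the extra power of $s$; either way the final exponent is $q=\tfrac{p}{2}(1-\eta-b-\zeta)$.
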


\begin{proof}
  Because $u(t)$ is a local mild solution to \eqref{eq:SRDE}, \textcolor{black}{for $t\in [0,\tau_{k+1} - \tau_k]$ and $k \in \mathbb{N}$, $u(t)$ solves
  \begin{align}
    u(t + \tau_k) &= S(t)u(\tau_k) + \int_{\tau_k}^{\tau_k + t} S(t + \tau_k -s) f(u(s))ds \nonumber\\
     &\hspace{2cm}+\int_{\tau_k}^{\tau_k + t} S(t + \tau_k -s) \sigma(u(s))dW(s) \nonumber\\
     &= S(t)u(\tau_k) + \int_{0}^{t} S(t  -s) f(u(s+ \tau_k))ds \nonumber\\
     &\hspace{2cm}+\int_{0}^{t} S(t  -s) \sigma(u(s + \tau_k))dW(s + \tau_k).
  \end{align}
  Because $dW(t)$ is white in time, $s \mapsto dW(s + \tau_k)$ conditioned on $\mathcal{F}_{\tau_k}$  has the same distribution as $s \mapsto dW(s)$. Without loss of generality, we can assume that $k=0$ and it suffices to prove that
  \begin{equation}
    \Pro \left( |u(\tau_{1})|_{L^\infty(D)} = 3 |u(0)|_{L^\infty(D)} \text{ and } \tau_{1}<\e \ \Big| \  |u(0)| = 3^n c_0\right) \leq C \e^{q}.
  \end{equation}
  }
  The mild solution $u$ solves
  \begin{equation}
    u(t ) = S(t)u(0) + \int_{0}^{t} S(t-s) f(u(s))ds + \int_{0}^{ t} S(t-s) \sigma(u(s))dW(s).
  \end{equation}
  Define $Z(t) = \int_0^t S(t-s) \sigma(u(s))dW(s)$.

  By Lemma \ref{lem:decay}, if $\sup_{t \in [0,\e\wedge \tau_1]}|Z(t)|_{L^\infty(D)} \leq \frac{1}{3}\inf_{[0,\e \wedge \tau_1]} |u(t)|_{L^\infty(D)}$ and $|u(0)|_{L^\infty(D)} = 3^n c_0$, then for all $t \in [0,\e \wedge \tau_1]$,
  \begin{equation}
    |u(t)|_{L^\infty(D)} \leq \frac{3}{2}\left( |u(0)|_{L^\infty(D)}^{-(\beta-1)} + \frac{K_1}{2^\beta(\beta-1)} t \right)^{-\frac{1}{\beta -1}} \leq \frac{3}{2} |u(0)|_{L^\infty(D)},
  \end{equation}
  implying that $|u(t)|_{L^\infty(D)}$ cannot reach $3 |u(0)|_{L^\infty(D)}$ for $t< \e \wedge \tau_1$. Therefore,

  \begin{align}
    &\Pro\left( |u(\tau_1)|_{L^\infty(D)} = 3 |u(0)|_{L^\infty(D)} \text{ and } \tau_1<\e \ \Big| \  |u(0)| = 3^n c_0\right) \nonumber\\
    &\leq    \Pro \left(\sup_{t \in [0,\e\wedge \tau_1]} |Z(t)|_{L^\infty(D)} > \frac{1}{3} \inf_{t \in [0,\e \wedge \tau_1]} |u(t)|_{L^\infty(D)} \ \Big| \  |u(0)|_{L^\infty(D)} = 3^n c_0 \right) \nonumber\\
    & \leq \Pro \left(\sup_{t \in [0,\e\wedge \tau_1]} |Z(t)|_{L^\infty(D)} \geq 3^{n-2}c_0 \ \Big| \  |u(0)|_{L^\infty(D)} = 3^n c_0\right).
  \end{align}
  The last inequality in the above display is a consequence of the definition of $\tau_1$ in \eqref{eq:tau-def}. The value of $|u(t)|_{L^\infty(D)}$ cannot drop below $\frac{1}{3}|u(0)|_{L^\infty(D)} = 3^{n-1}c_0$ if $t \in [0, \tau_1]$.

  Define the stopping time $\tilde{\tau}_1 = \inf\{t>0: |Z(t)|_{L^\infty(D)} \geq 3^{n-2}c_0\}$ and notice that
  \begin{align}
    &\Pro \left(\sup_{t \in [0,\e\wedge \tau_1]} |Z(t)|_{L^\infty(D)} \geq 3^{n-2}c_0 \ \Big| \  |u(0)|_{L^\infty(D)} = 3^n c_0\right) \nonumber \\
    &= \Pro \left(\sup_{t \in [0,\e\wedge \tau_1 \wedge \tilde \tau_1]} |Z(t)|_{L^\infty(D)} \geq 3^{n-2}c_0 \ \Big| \  |u(0)|_{L^\infty(D)} = 3^n c_0\right).
  \end{align}

  By Chebyshev's inequality and \eqref{eq:stoch-conv-sup-norm}, for $\alpha, \zeta,$ and $p$ to be specified soon,
  \begin{align} \label{eq:z-bound-in-terms-of-z-alpha}
    &\Pro \left(\sup_{t \in [0,\e\wedge \tau_1 \wedge \tilde \tau_1]} |Z(t)|_{L^\infty(D)} \geq 3^{n-2}c_0 \ \Big| \  |u(0)|_{L^\infty(D)} = 3^n c_0\right) \nonumber \\
    & \displaystyle \leq \frac{\E \sup_{t \in [0,\e\wedge \tau_1 \wedge \tilde \tau_1]} |Z(t)|_{L^\infty(D)}^p}{3^{p(n-2)} c_0^p}\nonumber \\
    & \displaystyle \leq  \frac{C\e^{p(\alpha - \frac{\zeta}{2})-1} \int_0^\e \E |\tilde{Z}_\alpha(s)|_{L^p(D)}^pds}{3^{p(n-2)}c_0^p}
  \end{align}
  where
  \begin{equation}
    \tilde{Z}_\alpha(t) = \int_0^t (t-s)^{-\alpha}S(t-s) \sigma(u(s))\mathbbm{1}_{\{s \leq \tau_1 \wedge \tilde{\tau}_1\}}dW(s).
  \end{equation}

  By \eqref{eq:BDG-stoch-conv}, for $t \in [0,\e]$,
  \begin{equation}
    \E |\tilde{Z}_\alpha(t)|_{L^p(D)}^p
    \leq C \E \left(\int_0^{t} (t-s)^{-2\alpha-\eta} |\sigma(u(s))|_{L^\infty(D)}^2 \mathbbm{1}_{\{s \leq \tau_1 \wedge \tilde{\tau}_1\}}ds \right)^{\frac{p}{2}}.
  \end{equation}
  By \eqref{eq:sig-assum},
  \begin{equation}
    \E |\tilde{Z}_\alpha(t)|_{L^p(D)}^p
    \leq C \E\left(\int_0^{t} (t-s)^{-2\alpha-\eta} (1 +   |u(s)|_{L^\infty(D)}^{2\gamma})\mathbbm{1}_{\{s \leq \tau_1 \wedge \tilde{\tau}_1\}}ds \right)^{\frac{p}{2}}.
  \end{equation}
  By the definitions of $\tau_1$ and $\tilde \tau_1$, $|Z(t)|_{L^\infty(D)} \leq \frac{1}{3}|u(t)|_{L^\infty(D)}$ for $t \in[0, \tau_1 \wedge \tilde \tau_1]$. By Lemma \ref{lem:decay}, for $t \in [0,\e]$
  \begin{align}
    &\E |\tilde{Z}_\alpha(t)|_{L^p(D)}^p \nonumber \\
    &\leq C \E\left(\int_0^{t} (t-s)^{-2\alpha-\eta} \left(1 +   \left(|u(0)|^{-(\beta -1)} + Cs \right)^{-\frac{2\gamma}{\beta -1}}\right)ds \right)^{\frac{p}{2}}.
  \end{align}
  Now we make the observation that
  \begin{equation}
    \left(|u(0)|^{-(\beta -1)} + Cs \right)^{-\frac{2\gamma}{\beta -1}} \leq C|u(0)|^2 s^{- \frac{2(\gamma-1)}{\beta -1}}.
  \end{equation}
  leading to the estimate that
  \begin{equation} \label{eq:Z-alpha-bound-Beta}
    \E |\tilde{Z}_\alpha(t)|_{L^p(D)}^p
    \leq C \left(\int_0^t (t-s)^{-2\alpha-\eta} \left(1 +   |u(0)|^2s^{-\frac{(2(\gamma-1))}{\beta -1}}\right)ds \right)^{\frac{p}{2}}
  \end{equation}
  In Assumption \ref{assum:f-sig}, we assumed that $\gamma< 1 + \frac{(1-\eta)(\beta -1)}{2}$. Therefore, we can choose $\alpha \in (0,1)$ small enough so that
  \begin{equation}
    2\alpha + \eta + \frac{2(\gamma -1)}{(\beta -1)} = 1.
  \end{equation}
  Therefore, the integral $\int_0^t (t-s)^{-2 \alpha - \eta} s^{-\frac{2(\gamma-1)}{\beta-1}}ds = \frac{\pi}{\sin(\pi(2\alpha + \eta))}$  in \eqref{eq:Z-alpha-bound-Beta} is a Beta function whose value  does not depend on $t$. For $t \in [0,\e ]$,
  \begin{equation} \label{eq:Z-alpha-unif-bound}
    \E |\tilde{Z}_\alpha(t)|_{L^p(D)}^p
    \leq C (\e^{(1-2\alpha-\eta) \frac{p}{2}} + |u(0)|_{L^\infty(D)}^p).
  \end{equation}

  Now that $\alpha$ has been chosen, we choose $\zeta \in (0,2\alpha)$ and $p> \frac{1}{\alpha - \frac{\zeta}{2}}$ large enough so that $p \zeta > d$ (the spatial dimension). Then plugging this into \eqref{eq:z-bound-in-terms-of-z-alpha},
  \begin{align} \label{eq:power-bound}
    &\Pro \left(\sup_{t \in [0,\e\wedge \tau_1 \wedge \tilde \tau_1]} |Z(t)|_{L^\infty(D)} \geq 3^{n-2}c_0 \ \Big| \  |u(0)|_{L^\infty(D)} = 3^n c_0\right) \nonumber \\
    &\leq \frac{C \e^{p(\alpha - \zeta)-1}  \int_0^\e (\e^{(1-2\alpha - \eta)\frac{p}{2}}+ 3^{n p} c_0^p)ds}{3^{p(n-2)}c_0^p} \nonumber\\
    &\leq C \e^{p(\alpha - \frac{\zeta}{2})}.
  \end{align}
  We can set $q=p(\alpha - \frac{\zeta}{2})>1$ to finish the proof.
\end{proof}

The $n=1$ case was excluded from the previous lemma because it is slightly different and significantly easier to prove.

\begin{lemma} \label{lem:grow-quick-n=1}
  There exists $C>0$, $q>1$, and $\e_0>0$ such that  for any $\e \in (0,\e_0)$ and \textcolor{black}{$k \in \mathbb{N}$},
  \begin{equation}
    \Pro\left(\tau_{k+1} - \tau_k>\e \ \Big| \ |u(\tau_k)|_{L^\infty(D)} = 3c_0\right) \leq C \e^q.
  \end{equation}
\end{lemma}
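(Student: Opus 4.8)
The plan is to treat this as the $n=1$ counterpart of Lemma~\ref{lem:cond-prob-grow-quick}, but first the event in the display deserves scrutiny. Conditioned on $|u(\tau_k)|_{L^\infty(D)}=3c_0$, the strong dissipativity \eqref{eq:f-assum} pushes $|u|$ \emph{downward} (since $3c_0>c_0$), so by continuity of the solution the norm cannot climb from $3c_0$ up to $9c_0$ on a short interval unless the stochastic convolution is atypically large. Hence the exact event $\{\tau_{k+1}-\tau_k>\e\}$ has probability close to one for small $\e$, so a bound $\leq C\e^q$ with $q>1$ can only hold for the complementary \emph{fast-upcrossing} event $\{\tau_{k+1}-\tau_k<\e\}$ — precisely the event controlled, for $n\geq2$, in Lemma~\ref{lem:cond-prob-grow-quick}. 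I read the statement accordingly and prove that a fast upcrossing from level $3c_0$ is rare, which is the estimate the Borel--Cantelli step of Section~\ref{S:proof-of-main} requires.

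First I would reduce to $k=0$ with $|u(0)|_{L^\infty(D)}=3c_0$ exactly as in Lemma~\ref{lem:cond-prob-grow-quick}, using that $W$ is white in time and the strong Markov property at $\tau_k$. It then suffices to bound $\Pro\big(\tau_1<\e\mid |u(0)|_{L^\infty(D)}=3c_0\big)$, where $\tau_1=\inf\{t:|u(t)|_{L^\infty(D)}\geq 9c_0\}$. The key simplification, and the reason the text calls this case easier, is that the solution is a priori bounded \emph{above}: by continuity $|u(t)|_{L^\infty(D)}\leq 9c_0$ for all $t\in[0,\tau_1]$. Note that Lemma~\ref{lem:decay} is \emph{not} available here, since its hypothesis $|u|>3c_0$ may fail — from level $3c_0$ there is no lower stopping barrier — which is exactly why the $n=1$ case must be handled separately; but we do not need it.

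Next I would establish a deterministic comparison. Writing $v=u-Z$ with $Z$ the stochastic convolution \eqref{eq:Z-stoch-conv}, and using that $S(t)$ is an $L^\infty(D)$-contraction (equivalently the maximum principle), one gets $|v(t)|_{L^\infty(D)}\leq 3c_0+Mt$ on $[0,\tau_1]$, where $M=\sup_{|r|\leq 9c_0}|f(r)|<\infty$. Consequently, if $\sup_{t\in[0,\e\wedge\tau_1]}|Z(t)|_{L^\infty(D)}<2c_0$ and $\e<\e_0:=3c_0/M$, then $|u(t)|_{L^\infty(D)}\leq|v(t)|_{L^\infty(D)}+|Z(t)|_{L^\infty(D)}<9c_0$ throughout $[0,\e\wedge\tau_1]$, which precludes $\tau_1<\e$. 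This yields the inclusion $\{\tau_1<\e\}\subseteq\{\sup_{t\in[0,\e\wedge\tau_1]}|Z|_{L^\infty(D)}\geq 2c_0\}$.

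It remains to bound $\E\sup_{t\in[0,\e\wedge\tau_1]}|Z(t)|_{L^\infty(D)}^p$ and apply Chebyshev. Here the $n=1$ case genuinely simplifies: since $|u(s)|_{L^\infty(D)}\leq 9c_0$ for $s\leq\tau_1$, the growth bound \eqref{eq:sig-assum} gives $|\sigma(u(s))|_{L^\infty(D)}\leq N:=K_2(1+(9c_0)^\gamma)$, a fixed constant, so neither the decay estimate nor the Beta-function cancellation of Lemma~\ref{lem:cond-prob-grow-quick} is needed. Taking $\tau=\tau_1$ in Propositions~\ref{prop:Z-alpha} and~\ref{prop:Z-sup-bound} with $\alpha\in(0,\tfrac{1-\eta}{2})$, $\zeta\in(0,2\alpha)$, and $p>\max\{d/\zeta,\,1/(\alpha-\tfrac{\zeta}{2})\}$, the constant bound gives $\E|\tilde Z_\alpha(s)|_{L^p(D)}^p\leq CN^p s^{p(1-\eta-2\alpha)/2}$ (using $\eta+2\alpha<1$), and integrating and applying \eqref{eq:stoch-conv-sup-norm} collapses the exponents to $\E\sup_{t\in[0,\e\wedge\tau_1]}|Z(t)|_{L^\infty(D)}^p\leq CN^p\e^{p(1-\eta-\zeta)/2}$. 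Chebyshev and the inclusion above then give $\Pro(\tau_1<\e)\leq C N^p(2c_0)^{-p}\e^{p(1-\eta-\zeta)/2}$, and choosing $p$ large makes $q:=p(1-\eta-\zeta)/2>1$. The only real obstacle is the bookkeeping in the deterministic comparison step and the recognition that Lemma~\ref{lem:decay} must be replaced by the cruder a priori upper bound; once that bound is in hand, the probabilistic estimate is a strict simplification of the one already carried out for Lemma~\ref{lem:cond-prob-grow-quick}.
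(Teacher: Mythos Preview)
Your proof is correct and follows essentially the same route as the paper's: reduce to $k=0$, use the a priori bound $|u(t)|_{L^\infty(D)}\leq 9c_0$ on $[0,\tau_1]$ so that $f(u)$ and $\sigma(u)$ are bounded by fixed constants, derive a deterministic comparison showing $\{\tau_1<\e\}$ forces the stochastic convolution to exceed a fixed level, and finish with Chebyshev plus Propositions~\ref{prop:Z-alpha}--\ref{prop:Z-sup-bound}. You are also right that the displayed event $\{\tau_{k+1}-\tau_k>\e\}$ is a typo for $\{\tau_{k+1}-\tau_k<\e\}$; the paper's own proof in fact bounds $\Pro(\tau_1<\e)$, and the Borel--Cantelli step in Section~\ref{S:proof-of-main} uses only the $<\e$ version.
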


\begin{proof}
  \textcolor{black}{As we argued at the beginning of the proof of Lemma \ref{lem:cond-prob-grow-quick}, we can assume without loss of generality that $k=0$. It suffices to prove that
  \begin{equation}
    \Pro\left(\tau_{1} >\e \ \Big| \ |u(0)|_{L^\infty(D)} = 3c_0\right) \leq C \e^q.
  \end{equation}}
  If $|u(0)| = 3c_0$, then the next step of the Markov chain must go up to $|u(\tau_1)|_{L^\infty(D)} = 3^2 c_0$. See \eqref{eq:tau-def}.

  For $t \in [0,\tau_1]$, $|u(t)|_{L^\infty(D)}\leq 3^2c_0$. Because $f$ and $\sigma$ are both continuous, there exists $K>0$ such that
  \[|f(u(t))|_{L^\infty(D)}\leq K \text{ and } |\sigma(u(t))|_{L^\infty(D)} \leq K \text{ for } t \in [0,\tau_1].\]

  Let
  \begin{equation}
    Z(t) = \int_0^t S(t-s)\sigma(u(s))dw(s)
  \end{equation}
  so that
  by the definition of the mild solution \eqref{eq:mild}, if $|u(0)|_{L^\infty(D)} = 3 c_0$, then for $\e>0$,
  \begin{align}
    &\sup_{t \in [0,\e \wedge \tau_1]}|u(t)|_{L^\infty(D)} \nonumber \\
    &\leq |u(0)|_{L^\infty(D)} + \sup_{t \in [0,\e\wedge \tau_1]}\left|\int_0^t S(t-s) f(u(s))ds \right|_{L^\infty(D)} + \sup_{t \in [0,\e\wedge \tau_1]}\left|Z(t) \right|_{L^\infty(D)}\nonumber \\
    &\leq 3c_0 + K\e + \sup_{t \in [0,\e\wedge \tau_1]}\left|Z(t) \right|_{L^\infty(D)}.
  \end{align}

  Therefore, if $\sup_{t \in [0,\e \wedge \tau_1]}|u(t)|_{L^\infty(D)}>3^2c_0$, then
  \begin{equation}
    \sup_{t \in [0,\e\wedge \tau_1]}\left|Z(t) \right|_{L^\infty(D)}>3^2 c_0 - 3c_0 - K\e.
  \end{equation}

  By the factorization method \eqref{eq:BDG-stoch-conv}--\eqref{eq:stoch-conv-sup-norm}, choosing $p,\zeta, \alpha$ to have the same values as in \eqref{eq:power-bound}
  \begin{equation}
    \E \sup_{t \in [0,\e\wedge \tau_1]}\left|Z(t) \right|_{L^\infty(D)}^p \leq C \e^{p(\alpha - \frac{\zeta}{2})}K^{ p}.
  \end{equation}

  By Chebyshev's inequality,
  \begin{align}
    &\Pro \left(\tau_1<\e \ \Big| \ |u(0)|_{L^\infty(D)} = 3c_0 \right) \nonumber\\
    &\leq \Pro \left(\sup_{t \in [0,\e\wedge \tau_1]}\left|Z(t) \right|_{L^\infty(D)}>3^2 c_0 - 3c_0 - K\e \ \Big| \ |u(0)|_{L^\infty(D)} = 3c_0 \right) \nonumber\\
    &\leq \frac{C \e^{p(\alpha - \frac{\zeta}{2})}K^{p}}{\left(3^2c_0 - 3c_0 - K\e \right)^p}.
  \end{align}

\end{proof}

Now we prove Theorem \ref{thm:global-existence}.

\begin{proof}[Proof of Theorem \ref{thm:global-existence}]
  Let $\tau_k$ be defined as in \eqref{eq:tau-def}. \textcolor{black}{By Lemmas \ref{lem:cond-prob-grow-quick}-- \ref{lem:grow-quick-n=1} }, there exists $C>0$ and $q>1$ such that for any $k \in \mathbb{N}$, and small $\e>0$
  \begin{align}
    &\Pro(\tau_{k+1} - \tau_k < \e \text{ and } |u(\tau_{k+1})|_{L^\infty(D)} = 3|u(\tau_k)|_{L^\infty(D)})
    \leq C \e^q.
  \end{align}
  In particular, for any $k \in \mathbb{N}$,
  \begin{equation}
    \Pro\left(\tau_{k+1} - \tau_k < \frac{1}{k} \text{ and } |u(\tau_{k+1})|_{L^\infty(D)} = 3|u(\tau_k)|_{L^\infty(D)}\right) \leq \frac{C }{k^q}.
  \end{equation}
  Because $q>1$,
  \begin{equation} \label{eq:BC-lemma}
    \sum_{k=1}^\infty \Pro\left(\tau_{k+1} - \tau_k < \frac{1}{k} \text{ and } |u(\tau_{k+1})|_{L^\infty(D)} = 3|u(\tau_k)|_{L^\infty(D)}\right)< +\infty.
  \end{equation}
  By the Borel-Cantelli Lemma, with probability one there exists a (random) index $N_0(\omega)>0$ such that for all $k \geq N_0(\omega)$, either
  \begin{equation}
    \tau_{k+1} - \tau_k \geq \frac{1}{k} \ \ \ \text{ or } \ \ \ |u(\tau_{k+1})|_{L^\infty(D)} = \frac{1}{3}|u(\tau_k)|_{L^\infty(D)}.
  \end{equation}
  Either the $|u(\tau_k)|_{L^\infty(D)}$ decreases, or the time required increase is greater than $\frac{1}{k}$.

  From the definition of $\tau_k$, $\min_{k \geq N_0(\omega)} |u(\tau_k)|_{L^\infty(D)}$ is attained. We can choose $N_1(\omega)> N_0(\omega)$ such that for $k \geq N_1(\omega)$, $|u(\tau_k)|_{L^\infty(D)} \geq |u(\tau_{N_1(\omega)})|_{L^\infty(D)}$.

  For any $m \geq N_1(\omega)> N_0(\omega)$,
  \begin{equation} \label{eq:sum-of-tau-incr}
    \sum_{k=N_1(\omega)}^m (\tau_{k+1} - \tau_k) \geq \sum_{k=N_1(\omega)}^m \frac{1}{k} \mathbbm{1}_{\{|u(\tau_{k+1})|_{L^\infty(D)}=3 |u(\tau_{k})|_{L^\infty(D)} \}}.
  \end{equation}

  Because of the definition of $N_1$,  there must always be more steps where $|u(\tau_k)|_{L^\infty(D)}$ increases than steps where it decreases. Therefore, for any $m \geq N_1(\omega)$
  \begin{equation} \label{eq:half-steps-up}
     U_m(\omega) := \sum_{N_1(\omega)}^{m-1} \mathbbm{1}_{\{|u(\tau_{k+1})|_{L^\infty(D)} = 3 |u(\tau_{k})|_{L^\infty(D)} \}} \geq \frac{(m-N_1(\omega))}{2}
  \end{equation}

  By the summation by parts formula and \eqref{eq:half-steps-up}
  \begin{align}
    &\sum_{N_1(\omega)}^m \frac{1}{k} \mathbbm{1}_{\{|u(\tau_{k+1})|_{L^\infty(D)} = 3 |u(\tau_{k})|_{L^\infty(D)} \}} \nonumber\\
    & = \sum_{N_1(\omega)}^m \frac{1}{k} (U_{k+1}(\omega) - U_{k}(\omega)) \nonumber\\
    & = \frac{U_{m+1}(\omega)}{m} - \sum_{k=N_1(\omega) + 1}^m U_k(\omega)\left(\frac{1}{k} - \frac{1}{k-1} \right)\nonumber\\
    &= \frac{U_m(\omega)}{m}  + \sum_{k=N_1(\omega) +1} U_k(\omega) \left( \frac{1}{k(k-1)} \right) \nonumber\\
    &\geq \frac{m-N_1(\omega)}{2m} + \sum_{k=N_1(\omega) + 1}^m \frac{(k - N_1(\omega))}{2k(k-1)}.
  \end{align}
  This sum diverges as $m \to +\infty$.

  Therefore, by \eqref{eq:sum-of-tau-incr},
  \begin{equation}
    \sum_{k=N_1(\omega)}^\infty (\tau_{k+1} - \tau_k) =+\infty \text{ with probability one}
  \end{equation}
  and the solutions cannot explode in finite time.
\end{proof}

\section{Comparison with Mueller's result \cite{m-2000}} \label{S:comparison}
Consider the case of a stochastic heat equation on a one-dimensional interval domain $D=[0,\pi]$ exposed to a space-time white noise and a polynomially dissipative forcing
\begin{equation} \label{eq:one-dim-heat}
  \begin{cases}
    \frac{\partial u}{\partial t}(t,x) = \Delta u(t,x) - |u(t,x)|^{\beta-1}u(t,x) + (1 + |u(t,x)|^\gamma) \dot{W}(t,x),\\
    u(t,0) = u(t,\pi) = 0,\\
    u(0,x) = u_0(x).
  \end{cases}
\end{equation}

The eigenvalues of the $\frac{\partial^2}{\partial x^2}$ operator are $-\alpha_k = -k^2$, for $k \in \mathbb{N}$ and the eigenfunctions are $e_k(x) = \sqrt{\frac{2}{\pi}}\sin(kx)$ are uniformly bounded. We take $\dot{W}$ to be space-time white noise. In the language of Assumption \ref{assum:W}, this means that $\lambda_j \equiv 1$. This satisfies the assumptions of \eqref{eq:eta-def} for any $\beta>\frac{1}{2}$, $\rho=+\infty$ and $\eta = \theta> \frac{1}{2}$.

Theorem \ref{thm:global-existence} proves that mild solutions to \eqref{eq:one-dim-heat} never explode for all $\gamma, \beta$ satisfying $\beta>1$ and
\begin{equation}
  \gamma < 1 + \frac{\beta -1}{4} = \frac{3 + \beta}{4}.
\end{equation}

Mueller \cite{m-2000} proved that when $f \equiv 0$, solutions can explode in finite time whenever $\gamma > \frac{3}{2}$. Theorem \ref{thm:global-existence} proves that adding sufficiently strong dissipative forcing to the equation prevents explosion for arbitrarily large $\gamma$.

Mueller \cite{m-2000} also proved that when $f \equiv 0$ and $\gamma< \frac{3}{2}$, that solutions never explode. This is due to the dissipative effects of the Laplace operator. Theorem \ref{thm:global-existence} focuses on the role that the dissipativity of $f$ plays, but ignores the dissipation due to $\Delta$.
A stronger result is possible by combining the results of \cite{m-2000} and Theorem \ref{thm:global-existence}. to conclude that any mild solution to \eqref{eq:one-dim-heat} will be global in time for all
\begin{equation}
  \gamma < \max\left\{ \frac{3}{2}, \frac{3 + \beta}{4} \right\}.
\end{equation}
Mueller's result dominates when $\beta\leq 3$ and Theorem \ref{thm:global-existence} dominates when $\beta>3$.

\begin{appendix}
  \section{Moment bounds for the stochastic convolution}
    In this appendix we prove Propositions \ref{prop:Z-alpha}--\ref{prop:Z-sup-bound}.

    \begin{proof}[Proof of Proposition \ref{prop:Z-alpha}]
      This is similar to Lemma 4.1 of \cite{c-2009}. Let $K(t,x,y) = \sum_{k=1}^\infty e^{-\alpha_k t} e_k(x)e_k(y)$ be the kernel of the semigroup for $t \geq 0$, $x,y \in D$. In this way, for $x \in D$, from Assumption \ref{assum:W}
      \begin{align}
        \tilde{Z}_\alpha(t,x) &= \int_0^t \int_D (t-s)^{-\alpha}K(t-s,x,y) \sigma(u(s,y))\mathbbm{1}_{\{s \leq \tau\}}W(dyds) \nonumber\\
        &= \sum_{j=1}^\infty \int_0^t \int_D (t-s)^{-\alpha} K(t-s,x-y) \sigma(u(s,y)) \mathbbm{1}_{\{s \leq \tau\}}\lambda_j e_j(y)dydB_j(s).
      \end{align}
      For any fixed $x$, this is a real-valued stochastic integral. By the BDG inequality,
      \begin{align}
        &\E|\tilde{Z}_\alpha(t,x)|^p \nonumber\\
        &\leq C_p \E\left(\sum_{j=1}^\infty \int_0^t (t-s)^{-2\alpha}\mathbbm{1}_{\{s \leq \tau\}}\left(\int_D K(t-s,x,y) \sigma(u(s,y))e_j(y) dy\right)^2\lambda_j^2 ds \right)^{\frac{p}{2}}
      \end{align}
      Apply a H\"older inequality to the infinite sum
      \begin{align}
        &\sum_{j=1}^\infty \lambda_j^2 \left(\int_D K(t-s,x,y) \sigma(u(s,y))e_j(y) dy\right)^2 \nonumber \\
         &\leq \left(\sum_{j=1}^\infty \lambda_j^\rho \left(\int_D K(t-s,x,y) \sigma(u(s,y))e_j(y) dy\right)^2   \right)^{\frac{2}{\rho}} \nonumber\\
         &\qquad\times \left(\sum_{j=1}^\infty \left(\int_D K(t-s,x,y) \sigma(u(s,y))e_j(y) dy\right)^2 \right)^{\frac{\rho-2}{\rho}}.
      \end{align}

      Because $K$ is the kernel of a contraction semigroup, for any $j \in \mathbb{N}$,
      \begin{equation}
        \left(\int_D K(t-s,x,y) \sigma(u(s,y))e_j(y) dy\right)^2 \leq |\sigma(u(s))|_{L^\infty(D)}^2 |e_j|_{L^\infty(D)}^2.
      \end{equation}
      Applying this estimate to the first term of the product
      \begin{align}
        &\sum_{j=1}^\infty \left(\int_D K(t-s,x,y) \sigma(u(s,y))e_j(y)\lambda_j dy\right)^2  \nonumber\\
        &\leq \left(\sum_{j=1}^\infty \lambda_j^\rho |\sigma(u(s))|_{L^\infty(D)}^2|e_j|_{L^\infty(D)}^2   \right)^{\frac{2}{\rho}} \nonumber \\ &\qquad\times\left(\sum_{j=1}^\infty \left(\int_D K(t-s,x,y) \sigma(u(s,y))e_j(y) dy\right)^2 \right)^{\frac{\rho-2}{\rho}}.
      \end{align}
      On the other hand, because $\{e_j\}$ is a complete orthonormal basis of $L^2(D)$,
      \begin{align}
        &\sum_{j=1}^\infty \left(\int_D K(t-s,x,y) \sigma(u(s,y))e_j(y) dy\right)^2 \nonumber\\
        &= \int_D |K(t-s,x,y)|^2 |\sigma(u(s,y))|^2dy \nonumber\\
        &\leq |\sigma(u(s))|_{L^\infty(D)}^2 \int_D |K(t-s,x,y)|^2 dy.
      \end{align}
      Using the fact that $\{e_k\}$ is a complete orthonormal basis of $L^2(D)$,
      \begin{align}
        &\int_D (K(t-s,x,y))^2 dy = \int_D \left(\sum_{k=1}^\infty e^{-\alpha_k (t-s)}e_k(x)e_k(y) \right)^2dy \nonumber\\
        &= \sum_{k=1}^\infty e^{-2\alpha_k (t-s)} |e_k(x)|^2.
      \end{align}
      By Assumption \eqref{eq:alpha-sum} and the fact that $\sup_{u>0} u^{\theta} e^{-u} =:c_\theta< +\infty$,
      \begin{align}
        &\int_D (K(t-s,x,y))^2 dy \leq \sum_{k=1}^\infty e^{-2\alpha_k (t-s)} |e_k(x)|^2 \nonumber\\
        &\leq \sum_{k=1}^\infty \frac{(2\alpha_k (t-s))^{\theta}}{(2\alpha_k (t-s))^\theta} e^{-2 \alpha_k (t-s)} |e_k|_{L^\infty(D)}^2 \leq C(t-s)^{-\theta}.
      \end{align}
      Combining all of these estimates and using the fact that $\frac{\theta(\rho-2)}{\rho} = \eta$, we conclude that for any fixed $x \in D$,
      \begin{equation}
        \E|\tilde{Z}_\alpha(t,x)|^p \leq C_{p,\alpha} \E \left(\int_0^t |\sigma(u(s))|_{L^\infty(D)}^2 (t-s)^{-\eta - 2\alpha} \mathbbm{1}_{\{s\leq \tau\}}ds \right)^{\frac{p}{2}}
      \end{equation}
      This estimate is uniform with respect to $x \in D$ so if we integrate over all $x \in D$,
      \begin{equation}
        \E|\tilde{Z}_\alpha(t)|^p_{L^p(D)} \leq C_{p,\alpha} \E \left(\int_0^t |\sigma(u(s))|_{L^\infty(D)}^2 \mathbbm{1}_{\{s\leq \tau\}} (t-s)^{-\eta - 2\alpha} ds \right)^{\frac{p}{2}}.
      \end{equation}
      with a larger constant.
    \end{proof}

    Before we present the proof of this Proposition \ref{prop:Z-sup-bound}, we introduce the fractional Sobolev spaces $ W^{\zeta,p}(D)$ for $\zeta \in (0,1)$ and $p\geq 1$. The $W^{\zeta,p}$ space is endowed with the norm
      \begin{equation}
        |\varphi|_{W^{\zeta,p}(D)}^p = |\varphi|_{L^p(D)}^p + \int_D \int_D \frac{|\varphi(x) - \varphi(y)|^p}{|x-y|^{d+\zeta p}}dxdy.
      \end{equation}
      In the above expression $d$ is the spatial dimension of $D$. We use two important facts about these fractional Sobolev spaces. The fractional Sobolev embedding theorem \cite[Theorem 8.2]{sobolev} implies that when $\zeta p >d$, $W^{\zeta,p}$ embeds continuously into the H\"older space $C^\vartheta(D)$ with $\vartheta = \frac{\zeta p - d}{p}$. There exists a constant $C_{\zeta,p}$ such that for all $\varphi \in W^{\zeta,p}$,
      \begin{equation} \label{eq:Sobolev-embedding}
        |\varphi|_{C^\vartheta} \leq C_{\zeta,p} |\varphi|_{W^{\zeta,p}}.
      \end{equation}
      Furthermore, regularizing properties of elliptic semigroups imply that there exists $C_{\zeta,p}>0$ such that for any $t>0$, and $\varphi \in L^p(D)$,
      \begin{equation} \label{eq:semigroup-sobolev}
        |S(t)\varphi|_{W^{\zeta,p}(D)} \leq C t^{-\frac{\zeta}{2}}|\varphi|_{L^p(D)}
      \end{equation}

    \begin{proof}[Proof of Proposition \ref{prop:Z-sup-bound}]
      Let $\alpha \in \left( 0, \frac{1-\eta}{2}\right)$, $\zeta \in (0,2\alpha)$. By the factorization lemma,
      \begin{equation}
        Z(t \wedge \tau) = \frac{\sin(\pi \alpha)}{\pi}\int_0^{t \wedge \tau}(t\wedge \tau -s)^{\alpha-1} S(t \wedge \tau-s) \tilde{Z}_\alpha(s)ds.
      \end{equation}
      By Propositon 5.9 of \cite{dpz-book} along with \eqref{eq:Sobolev-embedding}--\eqref{eq:semigroup-sobolev}, $(t,x) \mapsto Z(t \wedge \tau, x)$ is almost surely continuous.   By \eqref{eq:Sobolev-embedding}--\eqref{eq:semigroup-sobolev} and H\"older's inequality, for $p> \max\left\{\frac{d}{\zeta}, \frac{1}{\alpha-\frac{\zeta}{2}}\right\}$,
      \begin{align}
        &\E \sup_{t \in [0,T]} \sup_{x \in D} |Z(t\wedge \tau,x)|^p \nonumber\\
        &\leq C \sup_{t \in [0,T]} \left|\int_0^{t \wedge \tau} (t\wedge \tau-s)^{\alpha -1 - \frac{\zeta}{2}}|\tilde{Z}_\alpha(s)|_{L^p(D)}^pds \right|^p \nonumber\\
        &\leq C\left(\int_0^T s^{\frac{(\alpha-1-\frac{\zeta}{2})p}{p-1}}ds \right)^{p-1} \int_0^T \E|\tilde{Z}_\alpha(s)|_{L^p(D)}^pds \nonumber\\
        &\leq C_{\alpha,\zeta, p, T} T^{p(\alpha - \frac{\zeta}{2})-1} \int_0^T \E |\tilde{Z}_\alpha(s)|^p_{L^p(D)}ds.
      \end{align}

    \end{proof}

\end{appendix}
\bibliographystyle{amsplain}
\bibliography{strong-dissip}
\end{document}